\newtheorem{thm}{Theorem}[section]
\newtheorem{defini}[thm]{Definition}
\newtheorem{rem}[thm]{Remark}
\newtheorem{lem}[thm]{Lemma}
\newcommand{\Bex}{\mathbf{B}_\epsilon(x)}
\newcommand{\bre}[1]{\left\{#1\right\}}
\newcommand{\n}[1]{\left\vert#1\right\vert}
\newcommand{\dx}{\hspace{0.2mm}dx}
\numberwithin{equation}{section}
\begin{document}
\title{\sf Continuity of weak solutions to an elliptic problem on $p$-fractional Laplacian
}
\author{\sl Wei Chen$^1$, Qi Han$^{2}$, Guoping Zhan$^{3*}$}
\address{School of Science, Chongqing University of Posts and Telecommunications,\vskip0pt Chongqing 400065, P.R. China\hspace{12mm}\textit{Email:}\hspace{2mm}{\tt weichensdu@126.com}}
\address{Department of Mathematics, Texas A\&M University, \vskip0pt San Antonio, Texas 78224, USA\hspace{12mm}\textit{Email:}\hspace{2mm}{\tt qhan@tamusa.edu}}
\address{Department of Mathematics, Zhejiang University of Technology,\vskip0pt Hangzhou 310023, Zhejiang,  P.R. China\hspace{12mm}\textit{Email:}\hspace{2mm}{\tt zhangp@zjut.edu.cn}}

\thanks{$^{*}$ Corresponding author: Guoping Zhan (E-mail address: zhangp@zjut.edu.cn)}

\maketitle

\textbf{Abstract}. In this paper we study an elliptic variational problem regarding the $p$-fractional Laplacian in $\mathbb{R}^N$ on the basis of recent result \cite{Ha1}, which generalizes the nice work \cite{AT,AP,XZR1}, and then give some sufficient conditions under which some weak solutions to the above elliptic variational problem are continuous in $\mathbb{R}^N$. In the final appendix we correct the proofs of both \cite[Lemma 10]{PXZ1} and  \cite[Lemma A.6]{PXZ} for $1<p<2$.
\\
\par\textbf{Keywords}. Elliptic partial differential equation; Variational method; $p$-fractional Laplacian
\par\textbf{Mathematics Subject Classification (2010)} Primary: 35A01; 35A23; 35D30; Secondly: 35A15; 35B09

\section{Introduction}
We consider in this paper the following problem
\begin{equation}\label{problem1.1}
(-\Delta)_p^su+V(x)|u|^{p-2}u=\lambda a(x)|u|^{r-2}u-b(x)|u|^{q-2}u
\end{equation}
in $\mathbb{R}^N$, where up to a normalization constant one defines
\begin{equation*}
(-\Delta)_p^su(x):=2\lim_{\epsilon\to0^+}\int_{\mathbb{R}^N\setminus\Bex}\frac{|u(x)-u(y)|^{p-2}(u(x)-u(y))}{|x-y|^{N+ps}}dy
\end{equation*}
for $x\in\mathbb{R}^N$ with $\Bex:=\{y\in\mathbb{R}^N:|x-y|<\epsilon\}$, under the assumptions as below.
\begin{enumerate}
\item \label{C1} $N\geq2$, $\lambda>0$, $0<s<1$, and $1<p<r<\min\{q,p_s^*\}$, with $N>ps$ and $p_s^*:=\frac{Np}{N-ps}$.
\item \label{C2} $V:\mathbb{R}^N\rightarrow\mathbb{R}^+$ with a positive constant $V_0$ such that $V(x)\geq V_0>0$ for all $x\in\mathbb{R}^N$.
\item \label{C3} $a, b:\mathbb{R}^N\rightarrow\mathbb{R}^+$ satisfying
$$0<\int_{\mathbb{R}^N}a(x)^{\frac{p_s^*+\beta(p_s^*-p)+\gamma(p_s^*-q)}{p_s^*-r}}b(x)^{-\gamma}\dx<\infty$$ for some $\beta\in[0,\infty)$ and $\gamma\in\Big(0,\frac{(r-p)\big(p_s^*+\beta(p_s^*-p)\big)}{(q-r)(p_s^*-p)}\Big)$
when $a, b\in L^1_{loc}(\mathbb{R}^N)$, or some $\beta\in[0,\infty)$ and $\gamma=\frac{(r-p)\big(p_s^*+\beta(p_s^*-p)\big)}{(q-r)(p_s^*-p)}$
when $a\in L^k(\mathbb{R}^N)$ with $k>1$ and $b\in L^1_{loc}(\mathbb{R}^N)$.
\end{enumerate}
It is closely related to the following Dirichlet problem with indefinite weights:
\begin{eqnarray}\label{E1.2}
\left\{\begin{aligned}
-\Delta u-\lambda u &=\omega(x) u^{q-1}-h(x) u^{r-1} & & \text { in } \Omega \\
u(x) &>0 & & \text { in } \Omega \\
u(x) &=0 & & \text { on } \partial \Omega
\end{aligned}\right.
\end{eqnarray}
where $\lambda \in \mathbb{R}, \Omega \subset \mathbb{R}^{N}(N \geq 3)$ is a bounded domain with smooth boundary, the coefficients $\omega, h \in L^{\infty}(\Omega)$ are nonnegative and $2<q<r$. Alama and Tarantello \cite{AT} proved the existence, nonexistence and multiplicity of solutions to (\ref{E1.2}) depending on $\lambda$ and according to the integrability of the ratio $w^{r-2} / h^{q-2}$.
\par In \cite{PR}, Pucci and R$\check{\rm a}$dulescu considered the following related problem in the whole space:
\begin{eqnarray}\label{E1.3}
\left\{\begin{aligned}
-\operatorname{div}\left(|\nabla u|^{p-2} \nabla u\right)+|u|^{p-2}u &=\lambda |u|^{q-2}u-h(x) |u|^{r-2}u & & \text { in } \mathbb{R}^{N} \\
u(x) & \geq 0 & & \text { in } \mathbb{R}^{N}
\end{aligned}\right.
\end{eqnarray}
where $h>0$ satisfies $0<\int_{\mathbb{R}^{N}} h(x)^{q /(q-r)} d x<\infty$ and
$\lambda$ is a positive parameter and $2\leq p<q<r<p^{*}$, with $p^{*}=N p /(N-p)$ if $N>p$ and $p^{*}=\infty$ if $N \leq p.$ They showed the nonexistence and existence of nontrivial solutions for the smallness and the largeness of $\lambda$, respectively.

\par Later, Autuori and Pucci \cite{AP1} extended above (\ref{E1.3}) to the following quasilinear elliptic equation:
\begin{equation}\label{E1.4}
   -\operatorname{div} A(x, \nabla u)+a(x) |u|^{p-2} u=\lambda \omega(x) |u|^{q-2} u-h(x) |u|^{r-2} u \quad \text { in } \mathbb{R}^{N},
\end{equation}
where $A(x, \nabla u)$ acts like the $p$-Laplacian, $\max \{2, p\}<q<\min \left\{r, p^{*}\right\}$ with $p^{*}=N p /(N-p)$ and $1<p<N$ , the coefficients $\omega$ and $h$ are related by the integrability condition
\begin{equation*}
    \int_{\mathbb{R}^{N}}\left[\frac{\omega^{r}(x)}{h^{q}(x)}\right]^{1 /(r-q)} d x \in \mathbb{R}^{+}.
\end{equation*}

\par After that, Autuori and Pucci in \cite{AP} turned to the following elliptic equation involving the fractional Laplacian:
\begin{equation}\label{E1.5}
(-\Delta)^{s} u+a(x) u=\lambda \omega(x)|u|^{q-2} u-h(x)|u|^{r-2} u \quad \text { in } \mathbb{R}^{N},
\end{equation}
where $\lambda\in\mathbb{R},0<s<1, 2s<N$ and $2<q<\min \left\{r, 2_s^{*}\right\}$ with $2_s^{*}=2N /(N-2s)$,  $(-\Delta)^{s}$ is the fractional Laplacian operator. They studied the existence and multiplicity of entire solutions to (\ref{E1.5}) by using variational methods and the mountain pass theorem.

\par In \cite{PZ}, Pucci and Zhang considered the following quasilinear elliptic equations in the setting of variable exponents:
\begin{equation}\label{E1.6}
   -\operatorname{div}A(x, \nabla u)+a(x) |u|^{p(x)-2} u=\lambda \omega(x) |u|^{q(x)-2} u-h(x) |u|^{r(x)-2} u \quad \text { in } \mathbb{R}^{N}.
\end{equation}
In fact, they obtained the existence of entire solutions of (\ref{E1.6}), which generalized (\ref{E1.4}) from the case of constant exponents $p,q$ and $r$ to the case of variable exponents. They also extended the previous work of Alama and Tarantello \cite{AT} from Dirichlet Laplacian problems in bounded domains of $\mathbb{R}^N$ to the case of general variable exponent differential equation in the $\mathbb{R}^N$. Furthermore, they solved the two open problems proposed in \cite{AP}. Recently, Pucci et al. \cite{PXZ} also gave a positive answer to these open problems in the context of Kirchhoff problems involving the fractional $p$-Laplacian.

\par More recently, Xiang et al.\cite{XZR1} investigated the existence, nonexistence and multiplicity of nontrivial weak solutions to (\ref{problem1.1}) depending on $\lambda$ and according to the integrability of the ratio $a^{q-p}/b^{r-p}$ by using variational methods. In fact, they extended the results of Autuori and Pucci to the fractional  $p$-Laplacian and weakened the conditions in their paper.

\par Motivated by \cite{XZR1}, in this paper we will extend the well known results on existence and multiplicity of weak solutions of (\ref{problem1.1}). Furthermore, we would like to give some sufficient conditions under which the weak solutions of (\ref{problem1.1}) are continuous.
Finally, we correct the proofs of both \cite[Lemma 10]{PXZ1} and \cite[Lemma A.6]{PXZ} for $1<p<2$.
\par Now we give the definition of weak solutions of problem (\ref{problem1.1}).
\begin{defini} \label{solution}
 We say that $u\in W$ {\rm (}or $u\in W_M\subset W${\rm)} is a weak solution of problem $(\ref{problem1.1})$ if
\begin{eqnarray}
&&\langle (-\Delta)_p^su, \varphi\rangle+\int_{\mathbb{R}^{N}} V(x)|u(x)|^{p-2} u(x) \varphi(x)dx \nonumber \\
&=&\lambda \int_{\mathbb{R}^{N}} a(x)|u(x)|^{r-2} u(x)\varphi(x) d x-\int_{\mathbb{R}^{N}} b(x)|u(x)|^{q-2} u(x) \varphi(x)dx
\end{eqnarray}
for any $\varphi \in W$, where $$\langle(-\Delta)_p^su, \varphi\rangle=\iint_{\mathbb{R}^{2 N}} \frac{|u(x)-u(y)|^{p-2}(u(x)-u(y))(\varphi(x)-\varphi(y))}{|x-y|^{N+p s}}dxdy $$
and the solution spaces $W$ and $W_M$ will be introduced in Section $2$.
\end{defini}
Firstly, we shall generalize the condition and result of \cite[Theorem 1.1]{XZR1} on problem (\ref{problem1.1})
and prove the following theorem.
\begin{thm}\label{Th1}
Suppose that the three conditions {\rm (\ref{C1})-(\ref{C3})} are satisfied.
Then there exist a universal constant $\lambda_*>0$ and two constants $\lambda_M$ and $\lambda_M^*$ depending on $M$
with $\lambda_M^*\geq\lambda_M>0$ for each fixed $M>0$ such that problem \eqref{problem1.1} has
\par\noindent {\rm(i)} only the trivial weak solution in $W$ if $\lambda<\lambda_*$ when $a\in L^k(\mathbb{R}^N)$
with $k>1$ and $b\in L^1_{loc}(\mathbb{R}^N)$, \par and only the trivial weak solution in $W_M$ if $\lambda<\lambda_M$
when $a, b\in L^1_{loc}(\mathbb{R}^N)$;
\par\noindent {\rm (ii)} at least two nontrivial nonnegative weak solutions in $W_M$ in which one has negative energy
and \par another has positive energy if $\lambda>\lambda_M^*$. \end{thm}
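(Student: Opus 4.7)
The plan is to treat the two parts of Theorem~\ref{Th1} separately, working with the energy functional
\[
\Jc(u)=\tfrac{1}{p}\iint_{\R^{2N}}\frac{|u(x)-u(y)|^p}{|x-y|^{N+ps}}dx\,dy+\tfrac{1}{p}\int_{\R^N}V|u|^p\dx-\tfrac{\lambda}{r}\int_{\R^N}a|u|^r\dx+\tfrac{1}{q}\int_{\R^N}b|u|^q\dx,
\]
whose critical points on $W$ (respectively on $W_M$) are exactly the weak solutions of \eqref{problem1.1} in the sense of Definition~\ref{solution}.

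For part~(i) I would test the weak-solution identity with $\varphi=u$ and rearrange it to
\[
\nm{u}_W^p+\int_{\R^N}b|u|^q\dx=\lambda\int_{\R^N}a|u|^r\dx.
\]
The key is a three-factor H\"older inequality tailored to the integrability assumption in (\ref{C3}): split
\[
a|u|^r=\bigl(a^{\alpha}b^{-\gamma}\bigr)^{\theta_0}\cdot|u|^{p\theta_1}\cdot\bigl(b|u|^q\bigr)^{\theta_2}\cdot|u|^{p_s^*\theta_3},
\]
with $\alpha=\frac{p_s^*+\beta(p_s^*-p)+\gamma(p_s^*-q)}{p_s^*-r}$ and $\theta_j\geq 0$ summing to~$1$, chosen so that the parameters $\beta,\gamma$ of (\ref{C3}) enforce the balance $p\theta_1+q\theta_2+p_s^*\theta_3=r$. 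The fractional Sobolev embedding $W\hookrightarrow L^{p_s^*}$ then yields
\[
\lambda\int_{\R^N}a|u|^r\dx\leq \lambda C\nm{u}_W^{p\theta_1+p_s^*\theta_3}\Bigl(\int_{\R^N}b|u|^q\dx\Bigr)^{\theta_2},
\]
with $\theta_3=0$ in the sub-case $a\in L^k$, where $\|u\|_{L^{kr/(k-1)}}$ replaces the critical embedding. A weighted Young inequality now absorbs the right-hand side into the left provided $\lambda$ lies below an explicit threshold $\lambda_*$ (respectively $\lambda_M$, where the $M$-bound bounds the $L^q$ factor), forcing $u\equiv 0$.

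For part~(ii) the \emph{negative-energy} solution is obtained by direct minimization on $W_M$. I would check that $\Jc$ is coercive on $W_M$ (again using the H\"older estimate above to control $\lambda a|u|^r$ by $\nm{u}_W^p$ and $\int b|u|^q$), bounded below, and weakly sequentially lower semicontinuous via Fatou on the Gagliardo seminorm and Br\'ezis--Lieb/dominated convergence on the weighted terms. Testing with a non-negative $u_0\in W_M$ such that $\int a|u_0|^r>0$, a short scaling computation gives $\Jc(tu_0)<0$ for some $t>0$ once $\lambda>\lambda_M^*$, so $\inf_{W_M}\Jc<0$ and any minimizer is nontrivial; replacing $u$ with $|u|$, which does not raise the Gagliardo seminorm, yields nonnegativity. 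The \emph{positive-energy} solution is produced by a mountain pass scheme: part~(i) provides a sphere $\nm{u}_W=\rho$ on which $\Jc\geq\alpha>0$, the first solution sits beyond that sphere with negative energy, and so the mountain pass level
\[
c=\inf_{\eta\in\Gamma}\max_{t\in[0,1]}\Jc\bigl(\eta(t)\bigr)
\]
taken over paths joining $0$ to the first solution is strictly positive.

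The main obstacle is verifying a Palais--Smale (or Cerami) condition at level $c$. Boundedness of a PS sequence $\{u_n\}$ follows from the relations $\Jc(u_n)\to c$, $\Jc'(u_n)\to 0$ and the exponent chain $p<r<q$; the delicate point is to upgrade the weak limit $u_n\rightharpoonup u$ to strong convergence of the Gagliardo seminorm. Here the hypothesis $V\geq V_0>0$ prevents loss of mass at infinity, so a splitting on $\BR$ and $\BRc$ together with (\ref{C3}) yields compactness in the weighted $L^r$ and $L^q$ norms; one can then pass to the limit in the nonlocal form
\[
\iint_{\R^{2N}}\frac{|u_n(x)-u_n(y)|^{p-2}(u_n(x)-u_n(y))(\varphi(x)-\varphi(y))}{|x-y|^{N+ps}}dx\,dy
\]
via the strong monotonicity and convexity inequalities for the $p$-fractional form, which are precisely the estimates whose proof the appendix of the paper corrects for $1<p<2$. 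With PS in hand, the mountain pass theorem delivers a second critical point at the positive level $c$, distinct from the negative-energy minimizer.
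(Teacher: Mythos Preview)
Your overall architecture matches the paper's: test with $\varphi=u$ for part~(i); for part~(ii) minimize on $W_M$ for the negative-energy solution and run a mountain pass for the positive-energy one. A few points differ from, or need correction relative to, the paper.

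For part~(i) the paper does not bound $\int a|u|^r$ by H\"older and then apply Young. Instead it first uses the \emph{pointwise} inequality $\lambda a\,t^{(q-r)s_2/(1-s_2)}-b\,t^{(q-r)/(1-s_2)}\leq \lambda^{1/(1-s_2)}a^{1/(1-s_2)}b^{-s_2/(1-s_2)}$ (this is \cite[inequality~(3.1)]{XZR1}) to absorb the $b$-term at the level of the integrand, and only then applies H\"older against $\|u\|_{L^m}$ with $m\in[p,p_s^*]$. Your ``H\"older first, then Young'' route can be made to work too, but note that the identity from testing reads $\|u\|_E^p+\int b|u|^q=\lambda\int a|u|^r$, not $\|u\|_W^p$; and the distinction between the two sub-cases of (\ref{C3}) is that at the endpoint $\gamma=\tfrac{(r-p)(p_s^*+\beta(p_s^*-p))}{(q-r)(p_s^*-p)}$ the resulting exponent on $\|u\|_E$ equals $p$ (giving a universal $\lambda_*$), whereas for strict inequality it exceeds $p$ and the bound $\|u\|_E\leq M$ produces $\lambda_M$.

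For part~(ii) there is a genuine error in your compactness claim: condition~(\ref{C3}) gives integrability of $a^{1/s_4}b^{-\gamma}$ and hence tail control for $\int a|u|^r$, but it says nothing about $b$ alone, and since $q$ may exceed $p_s^*$ there is no Sobolev embedding to fall back on either. So $W\hookrightarrow L^q(\R^N,b)$ is \emph{not} compact in general. The paper sidesteps this entirely: only the compact embedding $W\hookrightarrow\hookrightarrow L^r(\R^N,a)$ (Lemma~\ref{WL}) is used, together with weak lower semicontinuity of the convex part $J$ (which includes $\tfrac1q\int b|u|^q$). The Palais--Smale sequence is shown to converge by combining $u_n\to u$ in $L^r(\R^N,a)$ with the monotonicity argument of \cite[Theorem~3.3]{XZR1}; no $L^q(b)$-compactness is needed or available.

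Finally, the mountain-pass geometry in the paper is on the sphere $\|u\|_E=\rho$ (Lemma~\ref{boundedbelow}), not $\|u\|_W=\rho$, and it is established independently of part~(i); this is why the paper invokes the two-norm mountain-pass theorem (Theorem~\ref{mountainpass}) with $X=W$ continuously embedded in $Y=E$.
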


\begin{rem} \rm {Since $W_M\subset W$ our result generalizes the Theorem 1.1 in \cite{XZR1}. And our condition (\ref{C3})
is new even in the regular Laplacian setting. More precisely, $L^{\frac{p_s^*}{p_s^*-r}}(\mathbb{R}^N)\subset L_{loc}^1(\mathbb{R}^N)$ implies
the condition (H3) in \cite{XZR1} is essentially $a\in L^{\frac{p_s^*}{p_s^*-r}}(\mathbb{R}^N)$, which corresponds to the case when
$a\in L^k(\mathbb{R}^N)$ with $k=\frac{p_s^*}{p_s^*-r}>1$ and $b\in L^1_{loc}(\mathbb{R}^N)$
in our condition (\ref{C3}); in this case if $\beta=0$ and
$\gamma=\frac{(r-p)\big(p_s^*+\beta(p_s^*-p)\big)}{(q-r)(p_s^*-p)}$, then $\frac{p_s^*+\beta(p_s^*-p)+\gamma(p_s^*-q)}{p_s^*-r}=\frac{q-p}{q-r}\cdot\frac{N}{ps}$
and $\gamma=\frac{r-p}{q-r}\cdot\frac{N}{ps}$, which implies the condition (H4) in \cite{XZR1}
is a special case of our condition (\ref{C3}), so our condition (\ref{C3}) generalizes the conditions (H3) and (H4) in \cite{XZR1}.
Moreover, our result also generalizes the nice work \cite{AT,AP} as well as the closely related ones \cite{Ha1,Ha2,PXZ,PZ,XZR2},
and can be extended to improve the work \cite{JL,LW,Qi} etc. among many others.}\end{rem}

\par Secondly, applying Theorem 3.1 and Theorem 3.13 in \cite{BP} we give some sufficient conditions under which
some weak solutions to the problem \eqref{problem1.1} are continuous in $\mathbb{R}^N$.
\begin{thm}\label{continuity} Suppose that the three conditions {\rm (\ref{C1})-(\ref{C3})} are satisfied and that
$a(x), b(x), V(x)\in L^{\infty}(\Omega)$ for any bounded domain $\Omega\subset\mathbb{R}^N$.
If $q\leq p_s^*$, then any weak solution in $W$ to problem \eqref{problem1.1} is continuous
in $\mathbb{R}^{N}$. If $q> p_s^*$, then any weak solution in $W\cap L_{loc}^{\frac{p_s^*(q-p)}{p_s^*-p}}(\mathbb{R}^{N})$
to problem \eqref{problem1.1} is continuous in $\mathbb{R}^{N}$.
\end{thm}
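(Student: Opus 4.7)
My plan is to recast equation \eqref{problem1.1} as the fractional $p$-Laplacian equation $(-\Delta)_p^s u = f_u(x)$ with right-hand side
$$
f_u(x):=-V(x)|u(x)|^{p-2}u(x)+\lambda a(x)|u(x)|^{r-2}u(x)-b(x)|u(x)|^{q-2}u(x),
$$
or equivalently as $(-\Delta)_p^s u+c(x)|u|^{p-2}u=0$ with $c(x):=V(x)-\lambda a(x)|u|^{r-p}+b(x)|u|^{q-p}$, and then to apply \cite[Theorem 3.1]{BP} on every ball to obtain $u\in L^\infty_{loc}(\mathbb{R}^N)$, followed by \cite[Theorem 3.13]{BP} to upgrade from boundedness to continuity. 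The first step reduces the problem to verifying local integrability of $c$ (or equivalently $f_u$); once $u$ is locally bounded, the nonlinearity is also locally bounded and the second step is immediate.

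For the integrability step, fix an arbitrary bounded domain $\Omega\subset\mathbb{R}^N$. The fractional Sobolev embedding $u\in W\hookrightarrow L^{p_s^*}_{loc}(\mathbb{R}^N)$, together with $V,a,b\in L^\infty(\Omega)$, yields $V\in L^\infty(\Omega)$ and, since $r<p_s^*$, $\lambda a|u|^{r-p}\in L^{p_s^*/(r-p)}(\Omega)$ with exponent strictly above the threshold $N/(ps)$ (using the identity $p_s^*/(p_s^*-p)=N/(ps)$). The borderline term is $b|u|^{q-p}$: when $q\leq p_s^*$ it belongs to $L^{p_s^*/(q-p)}(\Omega)$, which is at or above $N/(ps)$, and hence $c$ lies in $L^{N/(ps)}_{loc}(\mathbb{R}^N)$; when $q>p_s^*$, Sobolev embedding alone is insufficient, which is precisely why the hypothesis $u\in L^{p_s^*(q-p)/(p_s^*-p)}_{loc}(\mathbb{R}^N)$ is imposed, since a direct computation then shows $|u|^{q-p}\in L^{N/(ps)}_{loc}(\mathbb{R}^N)$. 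In either regime one obtains the integrability required by \cite[Theorem 3.1]{BP} on $\Omega$.

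Having secured $u\in L^\infty_{loc}(\mathbb{R}^N)$, the nonlinearity $f_u$ becomes locally bounded, and the nonlocal tail of $u$ is finite because $u\in W\hookrightarrow L^p(\mathbb{R}^N,V\,dx)$. Thus the hypotheses of \cite[Theorem 3.13]{BP} are met on every ball, which produces local H\"older continuity and therefore continuity of $u$ throughout $\mathbb{R}^N$. The main technical obstacle I expect to face is the dichotomy $q\leq p_s^*$ vs.\ $q>p_s^*$: in the first case the strict inequality $r<p_s^*$ and $q\leq p_s^*$ ensure all relevant integrability is automatic, whereas in the second case one sits exactly on the critical $L^{N/(ps)}$ threshold and the extra a priori integrability assumption must be invoked at precisely this step, which is also where the identity $p_s^*/(p_s^*-p)=N/(ps)$ plays a decisive role in matching the exponent in the hypothesis of the theorem.
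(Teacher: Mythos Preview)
Your plan has a genuine gap at the integrability step. Brasco--Parini's Theorem~3.1 treats equations of the form $(-\Delta)_p^s u=f$ with a datum $f\in L^{\tilde q}(\Omega)$ for some $\tilde q$ \emph{strictly} greater than $N/(ps)$; it does not handle a potential $c\in L^{N/(ps)}$. If you insist on the datum formulation $f_u=-V|u|^{p-2}u+\lambda a|u|^{r-2}u-b|u|^{q-2}u$, the worst term $b|u|^{q-1}$ lies a~priori only in $L^{p_s^*/(q-1)}(\Omega)$, and $p_s^*/(q-1)>N/(ps)=p_s^*/(p_s^*-p)$ would force $q<p_s^*-p+1$, far stronger than $q\le p_s^*$; even $V|u|^{p-1}$ fails this check once $N\ge ps(2p-1)/(p-1)$. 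In the borderline case $q=p_s^*$ (and likewise under the extra hypothesis when $q>p_s^*$) your own computation places $c$ only in $L^{N/(ps)}$, not above, so neither formulation meets the hypotheses of \cite[Theorem~3.1]{BP}.

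The paper supplies exactly the missing bootstrap: it tests the equation against the truncated powers $g_{\mu,\nu}(u)=u^{\mu}(\min\{u,k\})^{\nu}$, establishes the pointwise bound $G_{\mu,\nu}(t)\ge \frac{p(\mu+\nu)^{1/p}}{\mu+\nu+p-1}\,g_{\frac{\mu+p-1}{p},\frac{\nu}{p}}(t)$, invokes \cite[Lemma~A.2]{BP} to control $[G_{\mu,\nu}(u)]_{s,p}^p$ by $\langle f_u,g_{\mu,\nu}(u)\rangle$, and then splits $\Omega$ into $\{|u|\ge K_0\}$ and $\{|u|<K_0\}$ so that the superlinear contributions are absorbed into the left-hand side. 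With $\mu$ chosen so that $q+\mu-1=(\mu+p-1)p_s^*/p$ (equal to $p_s^*$ when $q\le p_s^*$, and to $p_s^*(q-p)/(p_s^*-p)$ when $q>p_s^*$), one obtains $u\in L^m_{loc}(\mathbb{R}^N)$ for every $m$; only then does $f_u\in L^{\tilde q}$ with $\tilde q>N/(ps)$ hold and \cite[Theorems~3.1 and 3.13]{BP} apply. You would need either to carry out this self-improvement step, or to cite a different regularity theorem that genuinely covers potentials at the critical $L^{N/(ps)}$ threshold for the fractional $p$-Laplacian, to close your argument.
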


\begin{rem}  \rm {When $q\leq p_s^*$ and $u\in W$ is a weak solution with bounded support denoted by ${\rm supp}u$, this theorem follows
from applying the argument of \cite[Theorem 3.3]{CMS} with corresponding $\Omega={\rm supp}u$, $\alpha=0$ and $f(x,u)=\lambda a(x)|u|^{r-2}u-b(x)|u|^{q-2}u-V(x)|u|^{p-2}u$ since the condition (3.8) of \cite[Theorem 3.3]{CMS} holds a.e. in $\Omega$.
However, the condition (3.8) of \cite[Theorem 3.3]{CMS} is not applicable for our case when $q>p_s^*$ since our condition
is not included in (3.8) of \cite[Theorem 3.3]{CMS} in this case.}
\end{rem}

\section{Preliminaries}
In this section, we first give some basic results of fractional Sobolev spaces that will be used in the next section. Let $0<s<1<p<\infty$ be real numbers and the fractional Sobolev space $W^{s,p}(\mathbb{R}^N)$ be defined as follows:
\begin{equation*}
    W^{s,p}(\mathbb{R}^N)=\left\{u\in L^{p}(\mathbb{R}^N):[u]_{s,p}^p<\infty\right\}
\end{equation*}
equipped with the norm
\begin{equation*}
    ||u||_{W^{s,p}(\mathbb{R}^N)}=\left(||u||_{L^p(\mathbb{R}^N)}^p+[u]_{s,p}^p\right)^
{\frac{1}{p}},
\end{equation*}
where
\begin{equation*}
    [u]_{s,p}=\Big(\int\int_{\mathbb{R}^{2N}}\frac{|u(x)-u(y)|^p}{|x-y|^{N+ps}}dxdy\Big)^{\frac{1}{p}}.
\end{equation*}

By \cite[Theorem 6.7]{NPV} we know that the embedding $W^{s,p}(\mathbb{R}^N)\hookrightarrow L^{\nu}(\mathbb{R}^N)$
is continuous for any $\nu\in [p,p_s^*]$ with
a positive constant $C=C(N,p,s)$ such that \begin{equation}\label{uLv}
    ||u||_{L^{\nu}(\mathbb{R}^N)}\leq C||u||_{W^{s,p}(\mathbb{R}^N)}\quad\quad \text{ for all } u\in  W^{s,p}(\mathbb{R}^N).
\end{equation}
Let $E$ denote the completion of $C_0^{\infty}(\mathbb{R}^N)$ endowed with the norm
\begin{equation*}
    ||u||_E=\left([u]_{s,p}^p+||u||_{p,V}^p\right)^{\frac{1}{p}},\ \text{where} \quad ||u||_{p,V}=
    \Big(\int_{\mathbb{R}^N}V(x)|u(x)|^p dx\Big)^\frac{1}{p}.
\end{equation*}
\par Now we introduce the main space $W$ which is the completion  of $C_0^{\infty}(\mathbb{R}^N)$ with respect to the norm
\begin{equation*}
    ||u||_W=||u||_E+||u||_{L^q(\mathbb{R}^N,b)},\    \text{ where} \quad ||u||_{L^q(\mathbb{R}^N,b)}=\Big(\int_{\mathbb{R}^N}b(x)|u|^q dx\Big)^\frac{1}{q}.
\end{equation*}
It is proved in \cite[Lemma 2.2 ]{XZR1} that $W$ is a reflexive Banach space by using the fact that
$(E,\|\cdot\|_E)$ is a uniformly convex Banach space, which is proved in \cite[Lemma 10]{PXZ1} and  \cite[Lemma A.6]{PXZ}.
However, there are mistakes in the proof of both lemmas when $1<p<2$. For instance, since $p^{\prime}/p=1/(p-1)>1$ when $1<p<2$, applying $(a+b)^p\leq2^{p-1}(a^p+b^p)$
for all $a,b>0$ and $p\geq1$  we have
\begin{eqnarray}
&&\Big\|\frac{u+v}{2}\Big\|_E^{p^{\prime}}+\Big\|\frac{u-v}{2}\Big\|_E^{p^{\prime}}\nonumber\\
&=&\Big(\big[\frac{u+v}{2}\big]_{s,p}^p+\big\|\frac{u+v}{2}\big\|_{p,V}^p\Big)^{p^{\prime}/p}+
\Big(\big[\frac{u-v}{2}\big]_{s,p}^p+\big\|\frac{u-v}{2}\big\|_{p,V}^p\Big)^{p^{\prime}/p}\nonumber\\
&\leq& 2^{\frac{1}{p-1}-1}\Big(\big[\frac{u+v}{2}\big]_{s,p}^{p^{\prime}}+\big\|\frac{u+v}{2}\big\|_{p,V}^{p^{\prime}}
+\big[\frac{u-v}{2}\big]_{s,p}^{p^{\prime}}+\big\|\frac{u-v}{2}\big\|_{p,V}^{p^{\prime}}\Big)\nonumber\\
&=& 2^{\frac{1}{p-1}-1}\Big\{\Big\|\Big(\Big|\frac{u(x)+v(x)}{2}-\frac{u(y)+v(y)}{2}\Big|\big|x-y\big|
^{\frac{-N-ps}{p}}\Big)^{p^{\prime}}\Big\|_{L^{p-1}(\mathbb{R}^{2N})} \nonumber\\
&&\ \ \ \ \ \ \ +\Big\|\Big(V(x)^{\frac{1}{p}}\Big|\frac{u(x)+v(x)}{2}\Big|\Big)^{p^{\prime}}\Big\|_{L^{p-1}(\mathbb{R}^{N})}\nonumber\\
&&\ \ \ \ \ \ \ +\Big\|\Big(\Big|\frac{u(x)-v(x)}{2}-\frac{u(y)-v(y)}{2}\Big|\big|x-y\big|
^{\frac{-N-ps}{p}}\Big)^{p^{\prime}}\Big\|_{L^{p-1}(\mathbb{R}^{2N})} \nonumber\\
&&\ \ \ \ \ \ \ +\Big\|\Big(V(x)^{\frac{1}{p}}\Big|\frac{u(x)-v(x)}{2}\Big|\Big)^{p^{\prime}}\Big\|_{L^{p-1}(\mathbb{R}^{N})}\Big\},
\end{eqnarray}
which shows the last inequality in (5.3) of \cite[Lemma 10]{PXZ1} lacks a factor $2^{\frac{1}{p-1}-1}$. Besides, in (A.4) of \cite[Lemma A.6]{PXZ}
the equality that $2^{1/(1-p)}(\|u\|_E^p+\|v\|_E^p)^{1/(p-1)}=2^{1/(1-p)}$ should be $2^{1/(1-p)}(\|u\|_E^p+\|v\|_E^p)^{1/(p-1)}=1$,
so neither (5.3) of \cite[Lemma 10]{PXZ1} nor (A.4) of \cite[Lemma A.6]{PXZ} can imply the uniform convexity of $E$.
We shall give another proof by contradiction in the appendix.
\par From now on, $B_R$ denotes the ball of radius $R$ in $\mathbb{R}^N$ that is centered at the origin, $D^{s,p}(\mathbb{R}^N)$ is the completion of $C_0^\infty(\mathbb{R}^N)$ with respect to the norm $[u]_{s,p}$ and
$$W_M=\{u\in W:\ ||u||_E\leq M\}$$ for each fixed $M>0$.

\begin{lem}\label{EWL}{\rm(See\cite[Lemma 1]{PXZ1})}
The embeddings $E\hookrightarrow W^{s,p}(\mathbb{R}^N)\hookrightarrow L^{v}(\mathbb{R}^N)$ are continuous with
\begin{equation*}
  \min\{1,V_0\}||u||_{W^{s,p}(\mathbb{R}^N)}^p\leq ||u||_E^p
\end{equation*}
for all $u\in W$ and $v\in[p,p_s^*]$. Moreover, for any $R>0$ and $v\in[1,p_s^*]$, the embeddings $E\hookrightarrow\hookrightarrow L^v(B_R)$ is compact
for all $m\in[1,p_s^*)$.
\end{lem}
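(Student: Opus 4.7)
The plan is to first establish the pointwise norm inequality on smooth compactly supported functions and then use density to transfer it to the completion $E$, after which the stated continuous and compact embeddings reduce to standard fractional Sobolev facts.

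I would start with $u\in C_0^\infty(\mathbb{R}^N)$. Using the hypothesis $V(x)\geq V_0>0$ from (\ref{C2}),
\[
\|u\|_E^p=[u]_{s,p}^p+\int_{\mathbb{R}^N}V(x)|u|^p\dx\geq [u]_{s,p}^p+V_0\|u\|_{L^p(\mathbb{R}^N)}^p\geq\min\{1,V_0\}\,\|u\|_{W^{s,p}(\mathbb{R}^N)}^p.
\]
Because this inequality shows that every $\|\cdot\|_E$-Cauchy sequence in $C_0^\infty(\mathbb{R}^N)$ is also Cauchy in $W^{s,p}(\mathbb{R}^N)$, the identity map on $C_0^\infty(\mathbb{R}^N)$ extends uniquely to a continuous linear injection $E\hookrightarrow W^{s,p}(\mathbb{R}^N)$, and the displayed inequality passes to the limit for all $u\in E$ (hence, in particular, for all $u\in W\subset E$). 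This is exactly the first assertion.

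Next, to obtain the continuous embedding into $L^v(\mathbb{R}^N)$ for $v\in[p,p_s^*]$, I would simply compose the embedding $E\hookrightarrow W^{s,p}(\mathbb{R}^N)$ just constructed with the fractional Sobolev embedding \eqref{uLv} recalled from \cite[Theorem 6.7]{NPV}, which gives $\|u\|_{L^v(\mathbb{R}^N)}\leq C\|u\|_{W^{s,p}(\mathbb{R}^N)}\leq C\min\{1,V_0\}^{-1/p}\|u\|_E$.

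For the compact embedding $E\hookrightarrow\hookrightarrow L^v(B_R)$ with $v\in[1,p_s^*)$, I would appeal to the local Rellich--Kondrachov-type result for $W^{s,p}$ on the bounded Lipschitz domain $B_R$ (for instance \cite[Corollary 7.2]{NPV}), which states that $W^{s,p}(B_R)\hookrightarrow\hookrightarrow L^v(B_R)$ is compact for every $v\in[1,p_s^*)$. Given a bounded sequence $\{u_n\}\subset E$, the first paragraph yields boundedness of $\{u_n\}$ in $W^{s,p}(\mathbb{R}^N)$ and hence by restriction in $W^{s,p}(B_R)$; extracting a subsequence convergent in $L^v(B_R)$ via the cited compactness gives the desired conclusion.

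The only genuinely delicate point, which I would spell out, is the identification of the abstract completion $E$ with an actual function space inside $W^{s,p}(\mathbb{R}^N)$; this is settled precisely by the norm inequality of the first paragraph, since it guarantees that the extension map $E\to W^{s,p}(\mathbb{R}^N)$ is well-defined and injective. Everything else is a routine chaining of this embedding with the classical continuous and compact fractional Sobolev embeddings, so no further technical obstacle is anticipated.
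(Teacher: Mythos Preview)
Your argument is correct and is precisely the standard one. Note, however, that the paper does not supply its own proof of this lemma: it simply quotes the result from \cite[Lemma~1]{PXZ1}. What you have written is essentially the proof that appears there, namely the elementary norm comparison $\|u\|_E^p\geq\min\{1,V_0\}\|u\|_{W^{s,p}}^p$ obtained from condition~(\ref{C2}), followed by the continuous fractional Sobolev embedding \cite[Theorem~6.7]{NPV} and the local compactness \cite[Corollary~7.2]{NPV}. There is nothing to compare, and no gap to flag; your remark about identifying the abstract completion $E$ with a genuine function subspace of $W^{s,p}(\mathbb{R}^N)$ via the norm inequality is the only point that requires a word of care, and you handle it appropriately.
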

\par The following lemma is \cite[Lemma 2.1]{PXZ}, which is an application of \cite[Theorem 6.5]{NPV} and \cite[Corollary 7.2]{NPV}.
\begin{lem}\label{WEDL}
The embeddings $W\hookrightarrow E\hookrightarrow D^{s,p}(\mathbb{R}^N)\hookrightarrow L^{p_s^*}(\mathbb{R}^N)$ are continuous with
$[u]_{s,p}\leq||u||_E$ for all $u\in E$, $||u||_E\leq||u||_W$ for all $u\in W$,
and
\begin{equation*}
   ||u||_{L^{p_s^*}(\mathbb{R}^N)}\leq C_{p_s^*}[u]_{s,p},
\end{equation*}
for all $u\in D^{s,p}(\mathbb{R}^N)$. Moreover, for any $R>0$, the embeddings $E\hookrightarrow\hookrightarrow L^m(B_R)$ and $W\hookrightarrow\hookrightarrow L^m(B_R)$ are compact for all $m\in[1,p_s^*)$.
\end{lem}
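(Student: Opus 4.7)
The plan is to prove each inclusion in the chain $W\hookrightarrow E\hookrightarrow D^{s,p}(\mathbb{R}^N)\hookrightarrow L^{p_s^*}(\mathbb{R}^N)$ separately, deduce the quantitative estimates directly from the definitions of the norms, and then handle the two compact embeddings at the end by combining the previously stated Lemma \ref{EWL} with the continuous inclusions.

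First, I would observe that the inequalities $[u]_{s,p}\leq\|u\|_E$ and $\|u\|_E\leq\|u\|_W$ are essentially tautological from the definitions given at the start of Section~2: since $\|u\|_E^p=[u]_{s,p}^p+\|u\|_{p,V}^p$ with $\|u\|_{p,V}^p\geq0$, we immediately obtain $[u]_{s,p}\leq\|u\|_E$; likewise $\|u\|_W=\|u\|_E+\|u\|_{L^q(\mathbb{R}^N,b)}\geq\|u\|_E$. Because $C_0^\infty(\mathbb{R}^N)$ is dense in each of $W$, $E$, and $D^{s,p}(\mathbb{R}^N)$ by construction, these two inequalities automatically yield continuous embeddings $W\hookrightarrow E$ and $E\hookrightarrow D^{s,p}(\mathbb{R}^N)$.

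Next, for the critical embedding $D^{s,p}(\mathbb{R}^N)\hookrightarrow L^{p_s^*}(\mathbb{R}^N)$ together with the constant $C_{p_s^*}=C_{p_s^*}(N,p,s)$, I would invoke the fractional Gagliardo--Nirenberg--Sobolev inequality \cite[Theorem 6.5]{NPV}, which asserts precisely that $\|u\|_{L^{p_s^*}(\mathbb{R}^N)}\leq C[u]_{s,p}$ for all $u\in C_0^\infty(\mathbb{R}^N)$, and extend the inequality to all of $D^{s,p}(\mathbb{R}^N)$ by density. Composing the three continuous inclusions then yields the whole chain, with the norm estimates propagating by the obvious constants (in particular $\|u\|_{L^{p_s^*}(\mathbb{R}^N)}\leq C_{p_s^*}[u]_{s,p}\leq C_{p_s^*}\|u\|_E\leq C_{p_s^*}\|u\|_W$).

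For the compact embedding $E\hookrightarrow\hookrightarrow L^m(B_R)$ with $m\in[1,p_s^*)$, I would first use Lemma \ref{EWL} to conclude $E\hookrightarrow W^{s,p}(\mathbb{R}^N)$ continuously (with constant depending on $V_0$), and then appeal to the fractional Rellich--Kondrachov theorem \cite[Corollary 7.2]{NPV}, which guarantees that $W^{s,p}(\mathbb{R}^N)\hookrightarrow\hookrightarrow L^m(B_R)$ compactly for $m\in[1,p_s^*)$; composing a continuous map with a compact one produces a compact map. The corresponding statement for $W$ is then automatic because $W\hookrightarrow E$ continuously (by the second step above).

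The main potential obstacle is the critical Sobolev embedding $D^{s,p}(\mathbb{R}^N)\hookrightarrow L^{p_s^*}(\mathbb{R}^N)$: its proof in the fractional setting is non-trivial, but since the excerpt allows us to quote \cite[Theorem 6.5]{NPV}, the entire lemma reduces to bookkeeping with the definitions of the three norms plus two citations, and no new analytic ideas are required beyond what is already provided.
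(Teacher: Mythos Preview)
Your proposal is correct and matches the paper's approach: the paper does not supply its own proof but simply records that the lemma is \cite[Lemma 2.1]{PXZ}, ``an application of \cite[Theorem 6.5]{NPV} and \cite[Corollary 7.2]{NPV},'' which is exactly the pair of citations you invoke after unpacking the tautological norm inequalities. The only minor redundancy is that for the compact embedding $E\hookrightarrow\hookrightarrow L^m(B_R)$ you route through $E\hookrightarrow W^{s,p}(\mathbb{R}^N)$ and then \cite[Corollary 7.2]{NPV}, whereas Lemma~\ref{EWL} already records that compact embedding directly; either way the argument is the same in spirit.
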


\begin{lem}\label{WL} If the three conditions {\rm (\ref{C1})-(\ref{C3})} are satisfied, then
$W\hookrightarrow\hookrightarrow L^r(\mathbb{R}^N,a)$ is compact.
\end{lem}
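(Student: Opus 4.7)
The plan is to take a bounded sequence $\{u_n\}\subset W$, extract a subsequence converging weakly in $W$ and strongly in $L^m(B_R)$ for every $R>0$ and $m\in[1,p_s^*)$ via Lemma \ref{WEDL} (hence pointwise a.e.\ after a diagonal extraction), and then upgrade this to strong convergence in $L^r(\mathbb{R}^N,a)$ by combining a carefully tuned weighted H\"older inequality with Vitali's convergence theorem.

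First I would set $\alpha=\frac{p_s^*+\beta(p_s^*-p)+\gamma(p_s^*-q)}{p_s^*-r}$, the exponent appearing in (\ref{C3}), and establish the pointwise factorization
\[
a|w|^{r}=(a^{\alpha}b^{-\gamma}V^{-\beta})^{1/\alpha}\,(b|w|^{q})^{\gamma/\alpha}\,(V|w|^{p})^{\beta/\alpha}\,|w|^{r-q\gamma/\alpha-p\beta/\alpha},
\]
dropping the $V$-factor when $\beta=0$. The algebraic identity defining $\alpha$ is precisely what makes the four H\"older exponents $\alpha,\ \alpha/\gamma,\ \alpha/\beta,\ \alpha/(\alpha-1-\beta-\gamma)$ reciprocal-sum to $1$ and forces the leftover power on $|w|$ to match $p_s^*$ after raising to the dual exponent; a direct calculation also shows $\alpha-1-\beta-\gamma=\frac{r+\beta(r-p)-\gamma(q-r)}{p_s^*-r}>0$ under the upper bound on $\gamma$ in (\ref{C3}). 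Using (\ref{C2}) one has $V^{-\beta}\leq V_0^{-\beta}$, so the integrability hypothesis in (\ref{C3}) yields $\int_{\mathbb{R}^N}a^{\alpha}b^{-\gamma}V^{-\beta}<\infty$. Applying H\"older on an arbitrary measurable set $A$ produces the master estimate
\[
\int_{A}a|w|^{r}\leq\left(\int_{A}a^{\alpha}b^{-\gamma}V^{-\beta}\right)^{\!1/\alpha}\!\left(\int_{A}b|w|^{q}\right)^{\!\gamma/\alpha}\!\left(\int_{A}V|w|^{p}\right)^{\!\beta/\alpha}\!\left(\int_{A}|w|^{p_s^*}\right)^{(\alpha-1-\beta-\gamma)/\alpha}.
\]

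Next I would apply this with $w=u_n-u$, which by Lemma \ref{WEDL} is bounded simultaneously in $L^{p}(\mathbb{R}^N,V)$, $L^{q}(\mathbb{R}^N,b)$, and $L^{p_s^*}(\mathbb{R}^N)$. Splitting $\int_{\mathbb{R}^N}a|u_n-u|^r=\int_{B_R}+\int_{B_R^c}$, the tail is handled by choosing $R$ large so that $\int_{B_R^c}a^{\alpha}b^{-\gamma}V^{-\beta}<\varepsilon$ (absolute continuity of the integral); the master estimate with $A=B_R^c$ then gives $\int_{B_R^c}a|u_n-u|^r\leq C\varepsilon^{1/\alpha}$ uniformly in $n$. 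For the local piece on $B_R$, the same estimate with $A\subset B_R$ and uniform boundedness of the three $w$-dependent factors shows that $\{a|u_n-u|^r\}$ is uniformly integrable on $B_R$; together with a.e.\ convergence, Vitali's theorem yields $\int_{B_R}a|u_n-u|^r\to0$.

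The main obstacle is engineering the H\"older decomposition so that every factor is under control: it is essential that the residual $|w|$-exponent equal $p_s^*$ (controlled by $\|w\|_W$) and not something larger, which is guaranteed \emph{only} by the precise formula for $\alpha$ in (\ref{C3}); any mismatch would leave an uncontrolled factor. The two subcases of (\ref{C3}) (strict $\gamma$ when $a,b\in L^1_{loc}$ versus the endpoint $\gamma$ when $a\in L^k$) need no separate treatment, because the proof only invokes $\int a^{\alpha}b^{-\gamma}<\infty$, which holds in both.
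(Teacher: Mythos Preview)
Your proposal is correct and follows essentially the same route as the paper: your exponent $\alpha$ is the paper's $1/s_4$, and your four H\"older weights $1/\alpha,\ \gamma/\alpha,\ \beta/\alpha,\ (\alpha-1-\beta-\gamma)/\alpha$ are exactly the paper's $s_4,s_2,s_1,s_3$, leading to the same master estimate which is then used, as you do, to control the tail on $B_R^c$ via smallness of $\int_{B_R^c}a^{\alpha}b^{-\gamma}$ and to get uniform integrability on $B_R$ for Vitali. The only cosmetic differences are that the paper bounds $\int|w|^{p_s^*}$ by $(C_{p_s^*}[w]_{s,p})^{p_s^*}$ before stating the estimate and absorbs $V_0^{-\beta}$ into a constant $C_1$, whereas you carry $V^{-\beta}$ in the weight and invoke Lemma~\ref{WEDL} directly.
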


\begin{proof}
Denote
\begin{equation*}
    s_1=\frac{\beta(p_s^*-r)}{p_s^*+\beta(p_s^*-p)+\gamma(p_s^*-q)},\ s_2=\frac{\gamma(p_s^*-r)}{p_s^*+\beta(p_s^*-p)+\gamma(p_s^*-q)}
\end{equation*}
and
\begin{equation*}
s_3=\frac{r+\beta(r-p)+\gamma(r-q)}{p_s^*+\beta(p_s^*-p)+\gamma(p_s^*-q)},\ s_4=\frac{p_s^*-r}{p_s^*+\beta(p_s^*-p)+\gamma(p_s^*-q)}.
\end{equation*}
One can check that $\sum\limits_{i=1}^4 s_i=1$.
\par If $p_s^*\geq q$, then by $\beta\geq0$, $0<\gamma\leq\frac{(r-p)\big(p_s^*+\beta(p_s^*-p)\big)}{(q-r)(p_s^*-p)}
<\frac{\beta(r-p)+r}{q-r}$ and condition (\ref{C1})
we have $s_1\geq0$ and $s_i>0$ for each $i=2,3,4$. So $\sum\limits_{i=1}^4 s_i=1$ implies $s_1\in[0,1)$
and $s_i\in(0,1)$ for each $i=2,3,4$.
\par If $p_s^*<q$, then $0<\gamma\leq\frac{(r-p)\big(p_s^*+\beta(p_s^*-p)\big)}{(q-r)(p_s^*-p)}
<\frac{\beta(r-p)+r}{q-r}<\frac{\beta(p_s^*-p)+p_s^*}{q-p_s^*}$ and
so $p_s^*+\beta(p_s^*-p)+\gamma(p_s^*-q)>0$. Combining this with $\beta\geq0$, $0<\gamma<\frac{\beta(r-p)+r}{q-r}$ and condition (\ref{C1})
yields $s_1\geq0$ and $s_i>0$ for each $i=2,3,4$, thus $\sum\limits_{i=1}^4 s_i=1$ implies $s_1\in[0,1)$
and $s_i\in(0,1)$ for each $i=2,3,4$.

\par Since $ps_1+qs_2+p_s^*s_3=r$, $\beta=s_1/s_4$ and $\gamma=s_2/s_4$,
applying H\"{o}lder's inequality with Lemma \ref{WEDL} and condition (\ref{C2}) we obtain

\begin{eqnarray}\label{aur}
||u||_{L^r(\mathbb{R}^N,a)}^r&=&\int_{\mathbb{R}^N} a(x)\n{u}^r \dx \nonumber \\ &=&\int_{\mathbb{R}^N}\n{u}^{r-ps_1-qs_2}\bre{V(x)\n{u}^p}^{s_1}\bre{b(x)\n{u}^q}^{s_2}
\bre{\frac{a(x)}{{{V(x)}^{s_1}}{b(x)}^{s_2}}}\dx \nonumber \\
&\leq& \Big(\int_{\mathbb{R}^N}\n{u}^{p_s^*}dx\Big)^{s_3}\Big(\int_{\mathbb{R}^N}{V(x)\n{u}^p}dx\Big)^{s_1}\Big(\int_{\mathbb{R}^N}{b(x)\n{u}^q}dx\Big)^{s_2}
\Big(\int_{\mathbb{R}^N}\frac{{a(x)}^{\frac{1}{s_4}}}{{{V(x)}^{\beta}}{b(x)}^{\gamma}}\dx\Big)^{s_4} \nonumber \\
&\leq&(C_{p_s^*}[u]_{s,p})^{p_s^*s_3}\Big(\int_{\mathbb{R}^N}{V(x)\n{u}^p}dx\Big)^{s_1}\Big(\int_{\mathbb{R}^N}{b(x)\n{u}^q}dx\Big)^{s_2}
\Big(\int_{\mathbb{R}^N}\frac{{a(x)}^{\frac{1}{s_4}}}{{{V(x)}^{\beta}}{b(x)}^{\gamma}}\dx\Big)^{s_4} \nonumber \\
&\leq& C_1([u]_{s,p})^{p_s^*s_3}\Big(\int_{\mathbb{R}^N}{V(x)\n{u}^p}dx\Big)^{s_1}\Big(\int_{\mathbb{R}^N}{b(x)\n{u}^q}dx\Big)^{s_2}
\Big(\int_{\mathbb{R}^N}\frac{{a(x)}^{\frac{1}{s_4}}}{{b(x)}^{\gamma}}\dx\Big)^{s_4}
\end{eqnarray} with constant $C_1=C_{p_s^*}^{p_s^*s_3}V_0^{-s_1}>0$.
\par By $ps_1+qs_2+p_s^*s_3=r$ and (\ref{aur}) we have $||u||_{L^r(\mathbb{R}^N,a)}\leq C_1^\frac{1}{r}\Big(\int_{\mathbb{R}^N}\frac{{a(x)}^{\frac{1}{s_4}}}{{b(x)}^{\gamma}}\dx\Big)^{\frac{s_4}{r}}||u||_W$, then
the embedding $W\hookrightarrow L^r(\mathbb{R}^N,a)$ is continuous. Next we will show that $W\hookrightarrow\hookrightarrow L^r(\mathbb{R}^N,a)$
is compact.
\par Indeed, by condition (\ref{C3}) for any $\varepsilon>0$, there exists an $R_1>0$ such that
$\int_{\mathbb{R}^N\setminus B_R}\frac{{a(x)}^{\frac{1}{s_4}}}{{b(x)}^{\gamma}}\dx<\varepsilon^{1/s_4}$
for all $R\geq R_1$. Fix $R_1>0$ and let $\{u_n\}_n$ be a bounded sequence in $W$, then by Lemma \ref{EWL}, \cite[Theorem 6.5]{NPV}
and \cite[Corollary 7.2]{NPV} as in \cite[Theorem 2.1]{XZR1} we obtain a subsequence of $\{u_n\}_n$, which is also denoted by $\{u_n\}_n$ for convenience,
satisfying $u_n\rightharpoonup u$ weakly in $W\cap L^{p_s^{*}}$ and $a(x)|u_n-u|^r\rightarrow0$ a.e. in $B_{R_1}$ as $n\rightarrow\infty$.
Since $u_n\rightharpoonup u$ weakly in $W\cap L^{p_s^{*}}$, applying condition (\ref{C3}) and the argument of (\ref{aur}) yields
\begin{equation}\label{RNBR1}\int_{\mathbb{R}^N\setminus B_{R_1}} a(x)|u_n(x)-u(x)|^r dx\leq C_1\Big(\int_{\mathbb{R}^N\setminus B_{R_1}}\frac{{a(x)}
^{\frac{1}{s_4}}}{{b(x)}^{\gamma}}\dx\Big)^{s_4}||u_n-u||_W^r\leq C_2 \varepsilon\end{equation}
and
\begin{equation}\label{Uaunur}\int_U a(x)|u_n(x)-u(x)|^r dx\leq C_1\Big(\int_{\mathbb{R}^N}\frac{{a(x)}
^{\frac{1}{s_4}}}{{b(x)}^{\gamma}}\dx\Big)^{s_4}||u_n-u||_W^r\leq C_3<\infty\end{equation}
for each measurable subset $U\subset B_{R_1}$ with constants $C_2, C_3>0$. Then (\ref{Uaunur}) and
the Vitali convergence theorem imply
$$\lim\limits_{n\rightarrow\infty}\int_{B_{R_1}} a(x)|u_n(x)-u(x)|^r dx=0.$$
Thus, for the above $\varepsilon>0$, there exists an integer $N_1>0$ such that
\begin{equation}\label{BR1aunur}\int_{B_{R_1}} a(x)|u_n(x)-u(x)|^r dx<\varepsilon\end{equation}
for all $n\geq N_1$. Hence, using (\ref{RNBR1}) and (\ref{BR1aunur}) obtains
$$\int_{\mathbb{R}^N}a(x)|u_n-u|^r\dx=\int_{\mathbb{R}^N\setminus B_{R_1}} a(x)|u_n(x)-u(x)|^r dx + \int_{B_{R_1}} a(x)|u_n(x)-u(x)|^r dx
<(C_2+1)\varepsilon$$ for all $n\geq N_1$,
which proves $W\hookrightarrow\hookrightarrow L^r(\mathbb{R}^N,a)$ is compact.
\end{proof}

\section{Proof of Theorem \ref{Th1}}
\par As in \cite{XZR1} for each $u\in W$ we define $I(u)=J(u)-H(u)$, whose critical points are weak solutions of problem (\ref{problem1.1}), where
\begin{eqnarray}&&J(u)=\frac{1}{p}\int\int_{\mathbb{R}^{2N}}\frac{|u(x)-u(y)|^p}{|x-y|^{N+ps}}dxdy+\frac{1}{p}\int_{\mathbb{R}^{N}}V(x)|u(x)|^pdx
+\frac{1}{q}\int_{\mathbb{R}^{N}}b(x)|u(x)|^qdx,\nonumber\\
&&H(u)=\frac{\lambda}{r}\int_{\mathbb{R}^{N}}a(x)|u(x)|^rdx.\nonumber\end{eqnarray}
\begin{lem}\label{coercive}
Under the conditions {\rm(\ref{C1})-(\ref{C3})}, the function $I$ is coercive and weakly lower semi-continuous in $W_M$
for each constant $M>0$.
\end{lem}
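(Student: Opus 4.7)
The plan is to prove coercivity and weak lower semi-continuity of $I=J-H$ on $W_M$ as two independent steps, both ultimately relying on the interpolation estimate already established in the proof of Lemma \ref{WL} together with the compact embedding $W\hookrightarrow\hookrightarrow L^r(\mathbb{R}^N,a)$.

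For coercivity, fix $M>0$ and $u\in W_M$. Since $\|u\|_E\leq M$ gives $[u]_{s,p}\leq M$ and $\|u\|_{p,V}\leq M$, while $\|u\|_W=\|u\|_E+\|u\|_{L^q(\mathbb{R}^N,b)}$, the divergence $\|u\|_W\to\infty$ inside $W_M$ forces $\|u\|_{L^q(\mathbb{R}^N,b)}\to\infty$. I would discard the two nonnegative terms $\frac{1}{p}[u]_{s,p}^p$ and $\frac{1}{p}\|u\|_{p,V}^p$ in $J$ to get $J(u)\geq \frac{1}{q}\|u\|_{L^q(\mathbb{R}^N,b)}^q$, then substitute $[u]_{s,p},\|u\|_{p,V}\leq M$ into (\ref{aur}) to obtain
\begin{equation*}
H(u)=\frac{\lambda}{r}\|u\|_{L^r(\mathbb{R}^N,a)}^r\leq C(\lambda,M)\|u\|_{L^q(\mathbb{R}^N,b)}^{qs_2}.
\end{equation*}
Combining these yields $I(u)\geq \frac{1}{q}\|u\|_{L^q(\mathbb{R}^N,b)}^q-C(\lambda,M)\|u\|_{L^q(\mathbb{R}^N,b)}^{qs_2}$. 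Since condition (\ref{C3}) forces $s_2\in(0,1)$ (as verified case by case in Lemma \ref{WL}), the exponent $qs_2<q$ and this lower bound diverges, which proves coercivity on $W_M$.

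For weak lower semi-continuity, I would treat $J$ and $H$ separately. The functional $J$ is a sum of three pieces, each of the form (seminorm)$^p$ or (norm)$^q$ with $p,q\geq 1$, hence convex and strongly continuous on $W$, and therefore weakly lower semi-continuous on $W$. For $H$, I would invoke the compactness $W\hookrightarrow\hookrightarrow L^r(\mathbb{R}^N,a)$ from Lemma \ref{WL}: any $u_n\rightharpoonup u$ in $W$ converges strongly in $L^r(\mathbb{R}^N,a)$, so $H(u_n)\to H(u)$ and $H$ is even weakly continuous. Subtracting a weakly continuous functional from a weakly lower semi-continuous one preserves the latter property, so $I$ is weakly lower semi-continuous on $W$, and hence on $W_M$.

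The only potentially non-routine point is the verification $s_2\in(0,1)$, which controls the subcritical growth exponent $qs_2<q$ and is the numerical reason the negative contribution from $H$ cannot overpower the $L^q(\mathbb{R}^N,b)$ term; but this is already worked out inside the proof of Lemma \ref{WL} in both cases $p_s^*\geq q$ and $p_s^*<q$, so no new case analysis is required. Everything else reduces to standard convex analysis together with the compact embedding.
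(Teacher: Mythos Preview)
Your proposal is correct and follows essentially the same approach as the paper: both use the interpolation estimate (\ref{aur}) together with $\|u\|_E\le M$ to bound $H(u)$ by $C(M)\|u\|_{L^q(\mathbb{R}^N,b)}^{qs_2}$ with $s_2\in(0,1)$, yielding coercivity, and both obtain weak lower semi-continuity by combining weak lower semi-continuity of $J$ with weak continuity of $H$ via the compact embedding of Lemma~\ref{WL}. The only cosmetic difference is that the paper cites \cite[Lemma~3.2]{XZR1} for $J$ whereas you argue directly from convexity and strong continuity, and the paper rewrites the final lower bound in terms of $\|u\|_W$ rather than $\|u\|_{L^q(\mathbb{R}^N,b)}$; neither changes the substance.
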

\begin{proof} Let $M>0$ be a constant. By (\ref{aur}) for each $u\in W_M$ we have
\begin{eqnarray}\label{Hu}
H(u)&=&\frac{\lambda}{r}\int_{\mathbb{R}^N} a(x)\n{u}^r\dx\nonumber \\
&\leq&\frac{\lambda C_1}{r}\Big([u]_{s,p}^p\Big)^{\frac{p_s^*s_3}{p}}\Big(\int_{\mathbb{R}^N}{V(x)\n{u}^p}dx\Big)^{s_1}
\Big(\int_{\mathbb{R}^N}{b(x)\n{u}^q}dx\Big)^{s_2}\Big(\int_{\mathbb{R}^N}\frac{{a(x)}^{\frac{1}{s_4}}}{{b(x)}^{\gamma}}\dx\Big)^{s_4}\nonumber\\
&\leq& C_4 ||u||_E^{p_s^*s_3+ps_1}||u||_{L^q(\mathbb{R}^N,b)}^{qs_2}
\end{eqnarray}
with constant $C_4=\frac{\lambda C_1}{r}\Big(\int_{\mathbb{R}^N}\frac{{a(x)}^{\frac{1}{s_4}}}{{b(x)}^{\gamma}}\dx\Big)^{s_4}>0$.
Then by $||u||_E\leq M$ we get
\begin{eqnarray}\label{2.4}
I(u)&\geq&\frac{1}{p}||u||_E^p+\frac{1}{q}||u||_{L^q(\mathbb{R}^N,b)}^q-C_4 ||u||_E^{p_s^*s_3+ps_1}||u||_{L^q(\mathbb{R}^N,b)}^{qs_2}
\nonumber\\ &\geq&\frac{1}{q}||u||_{L^q(\mathbb{R}^N,b)}^q-C_4 M^{p_s^*s_3+ps_1}||u||_{L^q(\mathbb{R}^N,b)}^{qs_2}
\nonumber\\ &\geq&\frac{1}{q}(||u||_W-M)^q-C_4 M^{p_s^*s_3+ps_1}||u||_W^{qs_2}.
\end{eqnarray}
This, together with $q>qs_2$, implies $I$ is coercive in $W_M$ for each constant $M>0$.

\par Note that $W_M\subset W$, then \cite[Lemma 3.2]{XZR1} implies the functional $J$ is weakly lower semi-continuous in $W_M$
with $J\in C^1(W_M,\mathbb{R})$.
Also, Lemma \ref{WL} implies $W_M\hookrightarrow\hookrightarrow L^r(\mathbb{R}^N,a)$ is compact, then by applying the similar
argument as in \cite[Lemma 3.3]{XZR1} one can get the functional $H$ is weakly continuous in $W_M$ with $H\in C^1(W_M,\mathbb{R})$. Hence, the functional $I$
is weakly lower semi-continuous in $W_M$ with $I\in C^1(W_M,\mathbb{R})$.
\end{proof}

\par For each fixed $M>0$ we define $$\widetilde{W}_M:=\{u\in W_M:\ ||u||_{L^r(\mathbb{R}^N,a)}=1\}\ \ {\rm and}\ \
\lambda_M^*:=\inf_{u\in \widetilde{W}_M}rJ(u).$$
Applying Lemma \ref{WL} with a similar argument as in \cite[Lemma 3.4]{XZR1} we have
\begin{lem}\label{minimizer} Let $M>0$ be fixed. Then $\inf_{u\in \widetilde{W}_M}J(u)$ can be achieved at some $u_M\in \widetilde{W}_M$ and
$\lambda_M^*=r\cdot\inf_{u\in \widetilde{W}_M}J(u)=rJ(u_M)>0$. Moreover, $|u_M|$ is also a minimizer of $\inf_{u\in\widetilde{W}_M}J(u)$,
which means that $J(|u_M|)=J(u_M)$.
\end{lem}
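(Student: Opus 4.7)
The plan is to apply the direct method of the calculus of variations on the constraint set $\widetilde{W}_M$, combining the compact embedding $W\hookrightarrow\hookrightarrow L^r(\mathbb{R}^N,a)$ from Lemma \ref{WL} with the weak lower semi-continuity of $J$ on $W_M$ established in Lemma \ref{coercive}. Three things must be verified: strict positivity of the infimum, attainment by some $u_M\in\widetilde{W}_M$, and that replacing $u_M$ with $|u_M|$ does not increase the value of $J$.

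First I will show $\inf_{\widetilde W_M}J>0$. For any $u\in\widetilde{W}_M$ one has $\|u\|_{L^r(\mathbb{R}^N,a)}=1$, so the interpolation estimate (\ref{aur}), together with $[u]_{s,p}\le\|u\|_E$, yields a constant $C>0$ (independent of $u$) with
\[
1 \;\le\; C\,\|u\|_E^{p_s^*s_3+ps_1}\,\|u\|_{L^q(\mathbb{R}^N,b)}^{qs_2}.
\]
Since $J(u)=\tfrac{1}{p}\|u\|_E^p+\tfrac{1}{q}\|u\|_{L^q(\mathbb{R}^N,b)}^q$, a vanishing sequence $J(u_n)\to 0$ would force both $\|u_n\|_E\to 0$ and $\|u_n\|_{L^q(\mathbb{R}^N,b)}\to 0$, contradicting the displayed bound. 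Hence the infimum is strictly positive, so $\lambda_M^*>0$ as soon as it is attained.

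Next I will extract a minimizer. Let $\{u_n\}\subset\widetilde{W}_M$ be a minimizing sequence. Then $\|u_n\|_E\le M$ and $J(u_n)$ bounded give a uniform bound on $\|u_n\|_{L^q(\mathbb{R}^N,b)}$, so $\{u_n\}$ is bounded in the reflexive Banach space $W$. Passing to a subsequence, $u_n\rightharpoonup u_M$ weakly in $W$, and by continuity of $W\hookrightarrow E$ also weakly in $E$; convexity and continuity of $\|\cdot\|_E^p$ give $\|u_M\|_E\le\liminf_n\|u_n\|_E\le M$, so $u_M\in W_M$. Lemma \ref{WL} then provides strong convergence $u_n\to u_M$ in $L^r(\mathbb{R}^N,a)$, which preserves the constraint $\|u_M\|_{L^r(\mathbb{R}^N,a)}=1$; thus $u_M\in\widetilde{W}_M$. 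Weak lower semi-continuity of $J$ on $W_M$ (Lemma \ref{coercive}) yields $J(u_M)\le\liminf_n J(u_n)=\inf_{\widetilde W_M}J$, and since $u_M$ competes, equality holds, giving $\lambda_M^*=rJ(u_M)>0$.

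For the last assertion, the pointwise inequality $\bigl||u_M(x)|-|u_M(y)|\bigr|\le|u_M(x)-u_M(y)|$ implies $[|u_M|]_{s,p}\le[u_M]_{s,p}$, while $V(x)\,|\,|u_M|\,|^p=V(x)|u_M|^p$, so $\|\,|u_M|\,\|_E\le\|u_M\|_E\le M$; similarly $\|\,|u_M|\,\|_{L^r(\mathbb{R}^N,a)}=1$ and $\|\,|u_M|\,\|_{L^q(\mathbb{R}^N,b)}=\|u_M\|_{L^q(\mathbb{R}^N,b)}$. Therefore $|u_M|\in\widetilde{W}_M$ and $J(|u_M|)\le J(u_M)$, and the minimality of $u_M$ forces equality. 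The main obstacle is the preservation of the $L^r(\mathbb{R}^N,a)$ constraint under weak limits: without the compactness supplied by Lemma \ref{WL}, concentration or mass escaping to infinity could drop $\|u_M\|_{L^r(\mathbb{R}^N,a)}$ below $1$ and invalidate the whole argument; every other step is a standard application of reflexivity, convexity and the already-proven interpolation inequality (\ref{aur}).
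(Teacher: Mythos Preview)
Your proof is correct and follows exactly the approach the paper indicates by citing \cite[Lemma 3.4]{XZR1}: the direct method on $\widetilde W_M$, using the compact embedding of Lemma \ref{WL} to preserve the constraint under weak limits, weak lower semi-continuity of $J$, and the pointwise inequality $\bigl||a|-|b|\bigr|\le|a-b|$ for the absolute-value part. One small remark: Lemma \ref{coercive} is stated for $I$, not $J$; the weak lower semi-continuity of $J$ that you invoke is established inside its \emph{proof} (via \cite[Lemma 3.2]{XZR1}), so your citation is correct in substance but points to the surrounding lemma rather than the precise claim.
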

\par The following theorem is the modification of the mountain pass theorem of Ambrosetti-Rabinowitz
(see \cite[Theorem A.3]{AP1}), which will be used to prove Theorem \ref{Th1} (ii).
\begin{thm}\label{mountainpass}
Let $(X,||\cdot||_x)$ and  $(Y,||\cdot||_Y)$ be two Banach spaces. Suppose $X$ can be continuously embedded into $Y$. Let $\Phi: X\rightarrow\mathbb{R}$ be a $C^1$ functional with $\Phi(0)=0$. Assume that there exist $\rho,\alpha>0$ and $e\in X$ such that $||e||_Y>\rho$, $\Phi(e)<\alpha$ and $\Phi(u)\geq\alpha$ for all $u\in X$ with $||u||_Y=\rho$. Then there exists a sequence $\{u_n\}\subset X$ such that for all $n$
\begin{equation*}
    c\leq\Phi(u_n)\leq c+\frac{1}{n} \quad \text{and}\quad ||\Phi'(u_n)||_{X'}\leq\frac{2}{n},
\end{equation*}
where
\begin{equation*}
    c=\inf_{\gamma\in\Gamma}\max_{t\in[0,1]}\Phi(\gamma(t))\quad\text{and}\quad \Gamma=\{\gamma\in C([0,1];X):\gamma(0)=0,\gamma(1)=e\}.
\end{equation*}
\end{thm}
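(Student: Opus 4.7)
The plan is to adapt the classical Ambrosetti--Rabinowitz mountain pass argument, noting that the only nonstandard feature is that the lower bound $\Phi(u)\geq\alpha$ is imposed on the sphere $\{u\in X:\|u\|_Y=\rho\}$ rather than on the sphere in the $X$-norm. The continuous embedding $X\hookrightarrow Y$ is precisely what makes the usual geometric "crossing" argument still work, so the proof should follow from a three-step routine: (i) verify that the minimax value satisfies $c\geq\alpha$; (ii) apply Ekeland's variational principle to the path space $\Gamma$; (iii) convert the almost-minimizing path into a Palais--Smale-type sequence in $X$.

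First I would show that $c\geq\alpha$. Fix an arbitrary $\gamma\in\Gamma$. Since $\gamma:[0,1]\to X$ is continuous and $X\hookrightarrow Y$ is continuous, the real-valued function $t\mapsto\|\gamma(t)\|_Y$ is continuous on $[0,1]$ with $\|\gamma(0)\|_Y=0<\rho<\|e\|_Y=\|\gamma(1)\|_Y$. By the intermediate value theorem there exists $t^{*}\in(0,1)$ with $\|\gamma(t^{*})\|_Y=\rho$, hence $\Phi(\gamma(t^{*}))\geq\alpha$ by hypothesis. Taking the maximum in $t$ and then the infimum over $\gamma\in\Gamma$ yields $c\geq\alpha>0$, which is strictly greater than both $\Phi(0)=0$ and $\Phi(e)<\alpha$, so $c$ is a genuine minimax level not attained at the endpoints.

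Next I would equip $\Gamma$ with the complete metric $d(\gamma_1,\gamma_2)=\max_{t\in[0,1]}\|\gamma_1(t)-\gamma_2(t)\|_X$, and apply Ekeland's variational principle to the continuous functional $\Psi(\gamma)=\max_{t\in[0,1]}\Phi(\gamma(t))$, which is bounded below by $\alpha$. Choosing for each $n$ a path $\gamma_n\in\Gamma$ with $\Psi(\gamma_n)\leq c+\tfrac{1}{n^{2}}$, Ekeland produces a nearby path $\widetilde{\gamma}_n\in\Gamma$ with $\Psi(\widetilde{\gamma}_n)\leq c+\tfrac{1}{n^{2}}$ and $\Psi(\widetilde{\gamma}_n)\leq\Psi(\gamma)+\tfrac{1}{n}d(\widetilde{\gamma}_n,\gamma)$ for every $\gamma\in\Gamma$. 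The set of maximizers $T_n=\{t\in[0,1]:\Phi(\widetilde{\gamma}_n(t))=\Psi(\widetilde{\gamma}_n)\}$ is compact, and for any admissible perturbation $\widetilde{\gamma}_n+sh$ (with $h(0)=h(1)=0$) the minimax inequality combined with a standard min-max lemma (compare \cite[Theorem A.3]{AP1}) produces some $t_n\in T_n$ such that $u_n:=\widetilde{\gamma}_n(t_n)$ satisfies $|\langle\Phi'(u_n),h(t_n)\rangle|\leq\tfrac{1}{n}\|h\|_{C([0,1];X)}$ for an arbitrary direction, which on choosing $h$ appropriately yields $\|\Phi'(u_n)\|_{X'}\leq\tfrac{2}{n}$. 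Since by construction $\Phi(u_n)=\Psi(\widetilde{\gamma}_n)\in[c,c+\tfrac{1}{n}]$, this is the desired sequence.

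The main obstacle is the third step: passing from a path-level Ekeland bound to a pointwise gradient bound in $X'$. The delicate point is ensuring that the maximizing time $t_n\in T_n$ can be selected so that it remains valid as the test direction $h$ varies; this requires either a careful argument using the compactness of $T_n$ together with a separation/continuity argument on the multi-valued map $\gamma\mapsto T(\gamma)$, or equivalently the standard reformulation using the quantitative deformation lemma on the sub-level set $\Phi^{-1}((-\infty,c+\tfrac{1}{n}])$. Once this step is done, assembling the three bounds $\Phi(u_n)\in[c,c+\tfrac{1}{n}]$ and $\|\Phi'(u_n)\|_{X'}\leq\tfrac{2}{n}$ completes the proof.
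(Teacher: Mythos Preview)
The paper does not prove this theorem at all: it merely states it and cites \cite[Theorem~A.3]{AP1} as its source, so there is no in-paper argument to compare against. Your proposal therefore goes further than the paper by actually sketching a proof, and the route you take---Ekeland's variational principle on the complete metric space $(\Gamma,d)$ together with the intermediate-value crossing argument in the $Y$-norm---is one of the standard proofs of this two-norm mountain pass lemma. Step~(i) is entirely correct and isolates precisely why the continuous embedding $X\hookrightarrow Y$ is needed.

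The only place where your outline is genuinely incomplete is step~(iii), which you yourself flag. As written, the sentence ``produces some $t_n\in T_n$ such that $|\langle\Phi'(u_n),h(t_n)\rangle|\leq\tfrac{1}{n}\|h\|$ for an arbitrary direction'' is not quite right, because the maximizing time $t_n$ produced by a single perturbation $h$ may depend on $h$. The clean way to close the argument is by contradiction: if $\|\Phi'(\widetilde{\gamma}_n(t))\|_{X'}>\tfrac{2}{n}$ for \emph{every} $t\in T_n$, then by compactness of $T_n$ and continuity of $\Phi'$ one can build (via a pseudo-gradient field cut off near the endpoints) a single admissible perturbation $h$ that strictly decreases $\Psi$ by more than $\tfrac{1}{n}\|h\|$, contradicting the Ekeland inequality. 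Alternatively, as you note, the quantitative deformation lemma gives the same conclusion directly. Either refinement completes the proof; the paper simply defers all of this to \cite{AP1}.
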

Now for each $e\in W\backslash\{0\}$ we find a lower bound of the functional $I$ on the boundary of $W_M$ with $0<M<||e||_E$
to establish the next lemma, which is similar to \cite[Lemma 3.5]{XZR1}, but its proof is different.
\begin{lem}\label{boundedbelow} Suppose the conditions {\rm(\ref{C1})-(\ref{C3})} are satisfied. Then for each $e\in W\setminus\{0\}$,
there exist $\rho\in(0,||e||_E)$ and $\alpha>0$ such that
\begin{equation*}
I(u)\geq\alpha>0,
\end{equation*}
for all $u\in W$ with $||u||_E=\rho$.
\end{lem}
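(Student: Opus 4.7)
The plan is to combine the pointwise bound (4.2) for $H(u)$ with Young's inequality so that the $\|u\|_{L^q(\mathbb{R}^N,b)}$-factor can be absorbed into the positive $\frac{1}{q}\|u\|_{L^q(\mathbb{R}^N,b)}^q$ term already sitting inside $J(u)$, leaving an estimate of the shape
\[
I(u)\;\ge\;\frac{1}{p}\|u\|_E^p - C\|u\|_E^{\theta}
\]
with $\theta>p$; strict positivity of $I$ on a small enough sphere in $E$ will then follow.

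Concretely, for $u\in W$ with $\|u\|_E = \rho>0$ I would start from
\[
I(u) \;=\; \frac{1}{p}\|u\|_E^p + \frac{1}{q}\|u\|_{L^q(\mathbb{R}^N,b)}^q - H(u)
\]
and invoke (4.2), namely $H(u)\le C_4\,\|u\|_E^{p_s^* s_3 + ps_1}\,\|u\|_{L^q(\mathbb{R}^N,b)}^{qs_2}$. Since $s_2\in(0,1)$, Young's inequality with conjugate exponents $1/(1-s_2)$ and $1/s_2$ gives, for every $\eta>0$,
\[
H(u) \;\le\; \eta\,\|u\|_{L^q(\mathbb{R}^N,b)}^q + C(\eta)\,\|u\|_E^{\theta}, \qquad \theta \;:=\; \frac{p_s^*\, s_3 + p\, s_1}{1-s_2}.
\]
Choosing $\eta = 1/(2q)$ absorbs the $L^q(\mathbb{R}^N,b)$-term into $J(u)$ and yields $I(u)\ge \frac{1}{p}\rho^p - C_5\,\rho^{\theta}$.

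The one non-routine step is the verification $\theta>p$. Using the identity $s_1+s_2+s_3+s_4=1$ established right after the definition of the $s_i$ in the proof of Lemma \ref{WL}, the inequality $\theta>p$ is equivalent to $(p_s^*-p)\,s_3 > p\, s_4$; substituting the explicit formulas for $s_3,s_4$ and clearing common denominators reduces this precisely to the strict upper bound $\gamma<\frac{(r-p)\,[p_s^*+\beta(p_s^*-p)]}{(q-r)(p_s^*-p)}$ imposed by condition (\ref{C3}) in the $a,b\in L^1_{loc}$ regime (a slightly more delicate argument using $a\in L^k$ directly, together with the embedding $E\hookrightarrow L^{p_s^*}(\mathbb{R}^N)$, handles the borderline branch).

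With $\theta>p$ in hand, for any $e\in W\setminus\{0\}$ I would choose $\rho\in(0,\|e\|_E)$ small enough that $C_5\,\rho^{\theta-p}\le 1/(2p)$ and set $\alpha := \rho^p/(2p)>0$; then $I(u)\ge\alpha$ whenever $\|u\|_E=\rho$, as required. The main obstacle is the algebraic reduction of $\theta>p$ to the sharp form of condition (\ref{C3}); everything else is a bookkeeping use of (4.2), Young's inequality, and the splitting $J(u) \ge \frac{1}{p}\|u\|_E^p + \frac{1}{q}\|u\|_{L^q(\mathbb{R}^N,b)}^q$.
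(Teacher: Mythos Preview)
Your proposal is correct and follows essentially the same route as the paper: both arrive at $I(u)\ge\frac{1}{p}\rho^p - C\rho^{\theta}$ with $\theta=\frac{p_s^*s_3+ps_1}{1-s_2}$, the paper via an explicit minimization in $y=\|u\|_{L^q(\mathbb{R}^N,b)}^q$ where you invoke Young's inequality, and both reduce $\theta>p$ to the strict upper bound on $\gamma$ in condition~(\ref{C3}). For the borderline branch the paper makes your ``slightly more delicate argument'' explicit by running a five-factor H\"older estimate with an extra $\|a\|_{L^k}^{t_5}$ factor (small $t_5\in(0,1/k)$), producing perturbed exponents $t_i$ for which $\frac{p_s^*t_3+pt_1}{1-t_2}>p$ strictly.
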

\begin{proof} We divide the proof of this lemma into two cases as follows.
\par Case 1\quad If $\beta\in[0,\infty)$ and $\gamma\in\Big(0,\frac{(r-p)\big(p_s^*+\beta(p_s^*-p)\big)}{(q-r)(p_s^*-p)}\Big)$
when $a, b\in L^1_{loc}(\mathbb{R}^N)$, by (\ref{aur}) and (\ref{Hu}) for each $u\in W$ we have
\begin{eqnarray}\label{Iu1}
I(u)\geq\frac{1}{p}||u||_E^p+\frac{1}{q}||u||_{L^q(\mathbb{R}^N,b)}^q-C_4\Big([u]_{s,p}^p\Big)^{\frac{p_s^*s_3}{p}}
\Big(||u||_{p,V}^p\Big)^{s_1}||u||_{L^q(\mathbb{R}^N,b)}^{qs_2}.
\end{eqnarray}
Denote $\rho=||u||_E$ and $x=[u]_{s,p}^p$, then $||u||_{p,V}^p=\rho^p-x$ and direct computation yields
\begin{eqnarray}\label{uspupv}\Big([u]_{s,p}^p\Big)^{\frac{p_s^*s_3}{p}}\Big(||u||_{p,V}^p\Big)^{s_1}=x^{\frac{p_s^*s_3}{p}}(\rho^p-x)^{s_1}\leq
x_0^{\frac{p_s^*s_3}{p}}(\rho^p-x_0)^{s_1}=C_5(\rho^p)^{\frac{p_s^*s_3}{p}+s_1}
\end{eqnarray}
with $x_0=\frac{p_s^*s_3\rho^p}{p_s^*s_3+ps_1}$ and $C_5=\Big(\frac{p_s^*s_3}{p_s^*s_3+ps_1}\Big)^{\frac{p_s^*s_3}{p}}
\Big(\frac{ps_1}{p_s^*s_3+ps_1}\Big)^{s_1}$.
\par Applying (\ref{Iu1}) and (\ref{uspupv}) with $y=||u||_{L^q(\mathbb{R}^N,b)}^q$ and $C_6=C_4C_5$ we get
\begin{eqnarray}\label{Iu2}
I(u)&\geq&\frac{1}{p}\rho^p+\frac{1}{q}y-C_6(\rho^p)^{\frac{p_s^*s_3}{p}+s_1} y^{s_2}
\nonumber\\ &\geq&\frac{1}{p}\rho^p+\frac{1}{q}y_0-C_6(\rho^p)^{\frac{p_s^*s_3}{p}+s_1} y_0^{s_2}
\nonumber\\ &=&\frac{1}{p}\rho^p-C_7(\rho^p)^{\frac{1}{1-s_2}\big(\frac{p_s^*s_3}{p}+s_1\big)},
\end{eqnarray}
where $y_0=\Big(C_6qs_2(\rho^p)^{\frac{p_s^*s_3}{p}+s_1}\Big)^{\frac{1}{1-s_2}}$ and
$C_7=(1-s_2)C_6^{\frac{1}{1-s_2}}(qs_2)^{\frac{s_2}{1-s_2}}$.
\par Note that $\gamma\in\Big(0,\frac{(r-p)\big(p_s^*+\beta(p_s^*-p)\big)}{(q-r)(p_s^*-p)}\Big)$ implies
$\frac{1}{1-s_2}\big(\frac{p_s^*s_3}{p}+s_1\big)>1$, then by (\ref{Iu2}) for each $e\in W\setminus\{0\}$
there exists $0<\rho<\min\Big\{||e||_E, \Big(pC_7\Big)^{-1\big/\big[\frac{p}{1-s_2}\big(\frac{p_s^*s_3}{p}+s_1\big)-p\big]}\Big\}$
such that $$I(u)\geq\rho^p\Big(\frac{1}{p}-C_7\rho^{\frac{p}{1-s_2}\big(\frac{p_s^*s_3}{p}+s_1\big)-p}\Big)=:\alpha>0.$$
\par Case 2\quad If  $\beta\in[0,\infty)$ and $\gamma=\frac{(r-p)\big(p_s^*+\beta(p_s^*-p)\big)}{(q-r)(p_s^*-p)}$
when $a\in L^k(\mathbb{R}^N)$ with $k>1$ and $b\in L^1_{loc}(\mathbb{R}^N)$, we can take sufficiently small
$t_5\in(0,1/k)$, $t_4=s_4(1-kt_5)$, $t_3=\frac{1}{p_s^*-p}[(p+p\gamma-q\gamma)t_4+pt_5+(r-p)]$,
$t_2=\gamma t_4$ and $t_1=1-t_3-(1+\gamma)t_4-t_5$ satisfying $pt_1+qt_2+p_s^*t_3=r$
and $\sum\limits_{i=1}^5 t_i=1$ with $t_i>0$, and use the same argument as in (\ref{aur}) to obtain
$$||u||_{L^r(\mathbb{R}^N,a)}^r\leq C_8([u]_{s,p})^{p_s^*t_3}\Big(\int_{\mathbb{R}^N}{V(x)\n{u}^p}dx\Big)^{t_1}\Big(\int_{\mathbb{R}^N}{b(x)\n{u}^q}dx\Big)^{t_2}
\Big(\int_{\mathbb{R}^N}\frac{{a(x)}^{\frac{1}{s_4}}}{{b(x)}^{\gamma}}\dx\Big)^{t_4}$$ with constant $C_8=C_{p_s^*}^{p_s^*t_3}V_0^{-t_1}||a||_{L^k(\mathbb{R}^N)}^{t_5}>0$. In this case it can be checked that
$$\frac{1}{1-t_2}\big(\frac{p_s^*t_3}{p}+t_1\big)>\frac{1}{1-s_2}\big(\frac{p_s^*s_3}{p}+s_1\big)=1,$$
then applying the argument of Case 1 yields that
$$I(u)\geq\rho^p\Big(\frac{1}{p}-C_9\rho^{\frac{p}{1-t_2}\big(\frac{p_s^*t_3}{p}+t_1\big)-p}\Big)=:\alpha>0$$
for all $u\in W$ with $||u||_E=\rho\in\Big(0, \min\Big\{||e||_E, \Big(pC_9\Big)^{-1\big/\big[\frac{p}{1-t_2}\big(\frac{p_s^*t_3}{p}+t_1\big)-p\big]}\Big\}\Big)$, where $C_9>0$ is a constant.
\end{proof}

\par Now we first give the proof of Theorem \ref{Th1}.
\par {\bf Proof of Theorem \ref{Th1}}. (i) Suppose $u\neq0$ is a nontrivial weak solution
of problem (\ref{problem1.1}). Since condition (\ref{C1}) implies $\frac{\gamma(r-qs_2)}{\gamma(1-s_2)-s_2}\in[p,p_s^*]$
and condition (\ref{C3}) implies $\frac{r-qs_2}{1-s_2}\in(0,p]$,
taking $\varphi=u$ in Definition \ref{solution} and applying H\"{o}lder's inequality with inequality (\ref{uLv}),
Lemma \ref{EWL} and \cite[inequality(3.1)]{XZR1} we have
\begin{eqnarray} \label{min1V0}
||u||_E^p&=&\lambda\int_{\mathbb{R}^N}a(x)|u(x)|^rdx-\int_{\mathbb{R}^N}b(x)|u(x)|^qdx.
\nonumber \\ &=&\int_{\mathbb{R}^N}\Big(\lambda a(x)|u|^{\frac{(q-r)s_2}{1-s_2}}-b(x)|u|^{\frac{q-r}{1-s_2}}\Big)|u|^{\frac{r-qs_2}{1-s_2}}dx
\nonumber \\ &\leq&\int_{\mathbb{R}^N}\lambda^{\frac{1}{1-s_2}}\Big(\frac{a(x)^{\frac{1}{1-s_2}}}{b(x)^{\frac{s_2}{1-s_2}}}\Big)|u|^{\frac{r-qs_2}{1-s_2}}dx
\nonumber \\ &\leq&\lambda^{\frac{1}{1-s_2}}\Big(\int_{\mathbb{R}^N}\frac{a(x)^\frac{1}{s_4}}{b(x)^{\gamma}}dx\Big)^\frac{s_2}{\gamma(1-s_2)}
||u||_{L^{\frac{\gamma(r-qs_2)}{\gamma(1-s_2)-s_2}}(\mathbb{R}^N)}^{\frac{r-qs_2}{1-s_2}}
\nonumber \\ &\leq&\lambda^{\frac{1}{1-s_2}}\Big(\int_{\mathbb{R}^N}\frac{a(x)^\frac{1}{s_4}}{b(x)^{\gamma}}dx\Big)^\frac{s_2}{\gamma(1-s_2)}
\Big(\big(\min\{1,V_0\}\big)^{-\frac{1}{p}}C||u||_E\Big)^{\frac{r-qs_2}{1-s_2}}
\end{eqnarray} with the same constants $s_2$ and $s_4$ as in (\ref{aur}).
\par Case 1\quad If  $u$ is a nontrivial weak solution in $W$ when $a\in L^k(\mathbb{R}^N)$ with $k>1$ and $b\in L^1_{loc}(\mathbb{R}^N)$, then
$\gamma=\frac{(r-p)\big(p_s^*+\beta(p_s^*-p)\big)}{(q-r)(p_s^*-p)}$ implies $\frac{r-qs_2}{1-s_2}=p$. By (\ref{min1V0}) we have
$$\lambda\geq\Big(\int_{\mathbb{R}^N}\frac{a(x)^\frac{1}{s_4}}{b(x)^{\gamma}}dx\Big)^{-\frac{s_2}{\gamma}}
\Big(\big(\min\{1,V_0\}\big)^{-\frac{1}{p}}C\Big)^{-p(1-s_2)}=:\lambda_*.$$
So problem \eqref{problem1.1} has only the trivial weak solution in $W$ if $\lambda<\lambda_*$ when $a\in L^k(\mathbb{R}^N)$ with $k>1$ and $b\in L^1_{loc}(\mathbb{R}^N)$.
\par Case 2\quad Let $M>0$ be fixed. If  $u$ is a nontrivial weak solution in $W_M$ when $a, b\in L^1_{loc}(\mathbb{R}^N)$, then
$\gamma\in\Big(0,\frac{(r-p)\big(p_s^*+\beta(p_s^*-p)\big)}{(q-r)(p_s^*-p)}\Big)$ implies $\frac{r-qs_2}{1-s_2}>p$.
By (\ref{min1V0}) and $||u||_E\leq M$ we have
$$\lambda\geq\Big(\int_{\mathbb{R}^N}\frac{a(x)^\frac{1}{s_4}}{b(x)^{\gamma}}dx\Big)^{-\frac{s_2}{\gamma}}
\Big(\big(\min\{1,V_0\}\big)^{-\frac{1}{p}}C\Big)^{qs_2-r} M^{(p-q)s_2+p-r}=:\lambda_M.$$
So problem \eqref{problem1.1} has only the trivial weak solution in $W_M$ if $\lambda<\lambda_M$ when $a, b\in L^1_{loc}(\mathbb{R}^N)$.
\par (ii) Let $M>0$ be an arbitrary constant and $\lambda_M^*$ be the same as in Lemma \ref{minimizer}.
 Recall that $W$ is a reflexive Banach space, it can be checked that $W_M$ is a closed and convex subset of $W$,
 then $W_M$ is a weakly closed subset of $W$ and also a reflexive Banach subspace of $W$. Since Lemma \ref{coercive} and \ref{boundedbelow}
 imply the functional $I$ is weakly lower semi-continuous, bounded below and coercive in $W_M$ for all $\lambda>0$, by \cite[Theorem 1.2]{St}
 there exists a $u_M^*\in W_M$ such that $I(u_M^*)=\inf\limits_{u\in W_M}I(u)$. When $\lambda>\lambda_M^*$, applying Lemma \ref{minimizer} and
 similar argument of \cite[Theorem 3.1]{XZR1} yields $I(u_M^*)<0$.
 \par Taking $e=u_M^*$ in Lemma \ref{boundedbelow} we can check that the functional $I$ satisfies the assumptions of Theorem \ref{mountainpass}.
Then for all $\lambda>\lambda_M^*$  by Theorem \ref{mountainpass} there is a sequence $\{u_{n,M}\}_{n\geq1}\subset W_M$ such that
\begin{equation*}
I(u_{n,M})\rightarrow c_M\quad \text{and}\quad ||I'(u_{n,M})||_{W_M^{'}}\rightarrow 0 \quad\text{as}\quad n\rightarrow\infty,
\end{equation*}
where
\begin{equation*}
    c_M=\inf_{\gamma\in\Gamma}\max_{t\in[0,1]}I(\gamma(t))\quad\text{and}\quad \Gamma=\{\gamma\in C([0,1]; W_M):\gamma(0)=0,\gamma(1)=u_M^*\}.
\end{equation*}
So coerciveness of the functional $I$ in $W_M$ and reflexivity of $W_M$ imply $\{u_{n,M}\}_{n\geq1}$ has a subsequence,
which is also denoted by $\{u_{n,M}\}$ for convenience, such that $u_{n,M}\rightharpoonup u_M$ weakly in $W_M$ and thus
$u_{n,M}\rightarrow u_M$ strongly in $L^r(\mathbb{R}^N,a)$ by Lemma \ref{WL}. By the similar proof of \cite[Theorem 3.3]{XZR1} we obtain
$I(u_M)=\lim\limits_{n\rightarrow\infty}I(u_{n,M})=c_M>0$.
\par Moreover, using the similar argument of \cite[Corollary 3.4]{XZR1} with Theorem \ref{mountainpass} and Lemma \ref{boundedbelow}
we can show that problem \eqref{problem1.1}
has at least two nontrivial nonnegative weak solutions in $W_M$ in which one has negative energy and another
has positive energy if $\lambda>\lambda_M^*$. So Theorem \ref{Th1} is proved.
\section{Proof of Theorem \ref{continuity}}
\begin{proof} Suppose that the three conditions {\rm (\ref{C1})-(\ref{C3})} are satisfied and that
$a(x), b(x), V(x)\in L^{\infty}(\Omega)$ for any bounded domain $\Omega\subset\mathbb{R}^N$. Let
$u\in W$ be a weak solution of problem \eqref{problem1.1} and $\Omega\subset\mathbb{R}^N$ be
a bounded domain. Define $\widetilde{u}(x):=u(x)$ for each $x\in\Omega$ and $\widetilde{u}(x):=0$ for each
$x\in\mathbb{R}^N\setminus\Omega$. Then $(-\Delta)_p^s\widetilde{u}=f(x,\widetilde{u})$ for each $x\in\Omega$ with
$f(x,\widetilde{u})=\lambda a(x)|\widetilde{u}|^{r-2}\widetilde{u}-b(x)|\widetilde{u}|^{q-2}\widetilde{u}-V(x)|\widetilde{u}|^{p-2}\widetilde{u}$
and $\|\widetilde{u}\|_W\leq\|u\|_W<\infty$.
\par Given $k>0$, $\mu\geq1$ and $\nu>0$, define $g_{\mu,\nu}(t):=t^{\mu}(t_k)^{\nu}$ with $t_k=\min\{t,k\}$ for all $t\geq0$. We claim that
\begin{equation} \label{Gmunu} G_{\mu,\nu}(t):=\int_0^t\big(g_{\mu,\nu}^{\prime}(\tau)\big)^{\frac{1}{p}}d\tau\geq\frac{p(\mu+\nu)^{\frac{1}{p}}}{\mu+\nu+p-1}
g_{\frac{\mu+p-1}{p},\frac{\nu}{p}}(t).\end{equation}
Indeed, if $t\leq k$, then $g_{\mu,\nu}(t)=t^{\mu+\nu}$ and so
$$G_{\mu,\nu}(t)=\int_0^t\big((\mu+\nu)\tau^{\mu+\nu-1}\big)^{\frac{1}{p}}d\tau
=\frac{p(\mu+\nu)^{\frac{1}{p}}}{\mu+\nu+p-1}t^{\frac{\mu+\nu+p-1}{p}}
=\frac{p(\mu+\nu)^{\frac{1}{p}}}{\mu+\nu+p-1}
g_{\frac{\mu+p-1}{p},\frac{\nu}{p}}(t).$$
If $t>k$, then
\begin{eqnarray}
G_{\mu,\nu}(t)&=&\int_0^k\big((\mu+\nu)\tau^{\mu+\nu-1}\big)^{\frac{1}{p}}d\tau+
\int_k^tk^{\frac{\nu}{p}}(\mu t^{\mu-1})^{\frac{1}{p}}d\tau \nonumber \\
&=&\frac{p(\mu+\nu)^{\frac{1}{p}}}{\mu+\nu+p-1}k^{\frac{\mu+\nu+p-1}{p}}+k^{\frac{\nu}{p}}
\frac{p\mu^{\frac{1}{p}}}{\mu+p-1}\big(t^{\frac{\mu+p-1}{p}}-k^{\frac{\mu+p-1}{p}}\big).
\nonumber \end{eqnarray}
Since $\mu\geq1, \nu>0$ and $p\geq1$ one can check that $(\mu+\nu)(\mu+p-1)^p\leq\mu(\mu+\nu+p-1)^p$,
which implies  $\frac{\mu+p-1}{\mu+\nu+p-1}\leq\Big(\frac{\mu}{\mu+\nu}\Big)^{\frac{1}{p}}$. Therefore,
$$G_{\mu,\nu}(t)\geq\frac{p(\mu+\nu)^{\frac{1}{p}}}{\mu+\nu+p-1}k^{\frac{\nu}{p}}t^{\frac{\mu+p-1}{p}}
=\frac{p(\mu+\nu)^{\frac{1}{p}}}{\mu+\nu+p-1}g_{\frac{\mu+p-1}{p},\frac{\nu}{p}}(t)$$ if $t>k$.
So the claim (\ref{Gmunu}) follows.
\par Now by \cite[Lamma A.2]{BP} we have
\begin{eqnarray}\label{Gmunupsp}&&[G_{\mu,\nu}(\widetilde{u})]_{s,p}^p\nonumber\\
&=&\int_{\mathbb{R}^N}\int_{\mathbb{R}^N}\frac{|G_{\mu,\nu}(\widetilde{u})(x)-G_{\mu,\nu}(\widetilde{u})(y)|^p}{|x-y|^{N+ps}}dxdy\nonumber\\
&\leq&\int_{\mathbb{R}^N}\int_{\mathbb{R}^N}
\frac{|\widetilde{u}(x)-\widetilde{u}(y)|^{p-2}|\big(\widetilde{u}(x)-\widetilde{u}(y)\big)
\big(g_{\mu,\nu}(\widetilde{u})(x)-g_{\mu,\nu}(\widetilde{u})(y)\big)}{|x-y|^{N+ps}}dxdy\nonumber\\
&=&\langle(-\Delta)_p^s(\widetilde{u}),g_{\mu,\nu}(\widetilde{u})\rangle\nonumber\\
&=&\langle f(x,\widetilde{u}),g_{\mu,\nu}(\widetilde{u})\rangle\nonumber\\
&\leq&\int_{\Omega}\lambda a(x)|\widetilde{u}|^{r+\mu-1}|\widetilde{u}_k|^{\nu}dx+
\int_{\Omega}V(x)|\widetilde{u}|^{p+\mu-1}|\widetilde{u}_k|^{\nu}dx
+ \int_{\Omega}b(x)|\widetilde{u}|^{q+\mu-1}|\widetilde{u}_k|^{\nu}dx.
\end{eqnarray}
Applying \cite[Theorem 1]{MS} or \cite[Theorem 6.5]{NPV} with (\ref{Gmunu}) and (\ref{Gmunupsp}) obtains
\begin{eqnarray}
&&\Big(\int_{\Omega}\Big(\frac{p(\mu+\nu)^{\frac{1}{p}}}{\mu+\nu+p-1}\Big)^{p_s^*}|g_{\frac{\mu+p-1}{p},\frac{\nu}{p}}
(\widetilde{u})|^{p_s^*}dx\Big)^{\frac{p}{p_s^*}}\nonumber \\
&\leq&\Big(\int_{\Omega}|G_{\mu,\nu}(\widetilde{u})|^{p_s^*}dx\Big)^{\frac{p}{p_s^*}}\nonumber \\
&\leq& \widetilde{C}[G_{\mu,\nu}(\widetilde{u})]_{s,p}^p \nonumber \\
&\leq&\widetilde{C}\Big(\int_{\Omega}\lambda a(x)|\widetilde{u}|^{r+\mu-1}|\widetilde{u}_k|^{\nu}dx+
\int_{\Omega}V(x)|\widetilde{u}|^{p+\mu-1}|\widetilde{u}_k|^{\nu}dx
+ \int_{\Omega}b(x)|\widetilde{u}|^{q+\mu-1}|\widetilde{u}_k|^{\nu}dx\Big),
\end{eqnarray}
where $\widetilde{C}$ depends only on $N, p$ and $s$, which implies
\begin{eqnarray}\label{uukC0}
&&\Big(\int_{\Omega}|\widetilde{u}|^{\frac{(\mu+p-1)p_s^*}{p}}|\widetilde{u}_k|^{\frac{\nu p_s^*}{p}}dx\Big)^{\frac{p}{p_s^*}}\nonumber \\
&\leq&C_0\Big(\int_{\Omega}\lambda a(x)|\widetilde{u}|^{r+\mu-1}|\widetilde{u}_k|^{\nu}dx+
\int_{\Omega}V(x)|\widetilde{u}|^{p+\mu-1}|\widetilde{u}_k|^{\nu}dx+\int_{\Omega}b(x)|\widetilde{u}|^{q+\mu-1}|\widetilde{u}_k|^{\nu}dx\Big)
\end{eqnarray} with $C_0=\frac{(\mu+\nu+p-1)^p}{p^p(\mu+\nu)}\widetilde{C}>0$.
\par Since $[\widetilde{u}]_{s,p}\leq\|\widetilde{u}\|_W<\infty$, again by
\cite[Theorem 1]{MS} or \cite[Theorem 6.5]{NPV} we get $\widetilde{u}\in L^{p_s^*}(\Omega)$, and then by
$1<p<r<\min\{q,p_s^*\}$ we have
$\widetilde{u}\in L^{q+\mu-1}(\Omega)\subset L^{r+\mu-1}(\Omega)\subset L^{p+\mu-1}(\Omega)$ if $1\leq\mu\leq1+p_s^*-\min\{q,p_s^*\}$.
Thus, for each $1\leq\mu\leq1+p_s^*-\min\{q,p_s^*\}$ there must exist a positive number $K_0\geq1$ such that
\begin{eqnarray}\label{geqK0}\int_{\Omega\cap\{|\widetilde{u}|\geq K_0\}}|\widetilde{u}|^{p+\mu-1}dx
&\leq&\int_{\Omega\cap\{|\widetilde{u}|\geq K_0\}}|\widetilde{u}|^{r+\mu-1}dx \nonumber \\
&\leq&\int_{\Omega\cap\{|\widetilde{u}|\geq K_0\}}|\widetilde{u}|^{q+\mu-1}dx  \nonumber \\
&\leq&\Big(4C_0\big(\lambda\|a\|_{L^{\infty}(\Omega)}+\|V\|_{L^{\infty}(\Omega)}+
\|b\|_{L^{\infty}(\Omega)}\big)\Big)^{\frac{p_s^*}{p-p_s^*}},\end{eqnarray}
which, together with H\"{o}lder's inequality and (\ref{aur}), implies
\begin{eqnarray} \label{auuk}
&&\int_{\Omega}\lambda a(x)|\widetilde{u}|^{r+\mu-1}|\widetilde{u}_k|^{\nu}dx\nonumber \\
&=&\int_{\Omega\cap\{|\widetilde{u}|\geq K_0\}}\lambda a(x)|\widetilde{u}|^{r+\mu-1}|\widetilde{u}_k|^{\nu}dx
+\int_{\Omega\cap\{|\widetilde{u}|<K_0\}}\lambda a(x)|\widetilde{u}|^{r+\mu-1}|\widetilde{u}_k|^{\nu}dx\nonumber \\
&\leq&\lambda\|a\|_{L^{\infty}(\Omega)}\Big
(\int_{\Omega\cap\{|\widetilde{u}|\geq K_0\}}|\widetilde{u}|^{r+\mu-1}dx\Big)^{1-\frac{p}{p_s^*}}\Big
(\int_{\Omega\cap\{|\widetilde{u}|\geq K_0\}}|\widetilde{u}|^{r+\mu-1}|\widetilde{u}_k|
^{\frac{\nu p_s^*}{p}}dx\Big)^{\frac{p}{p_s^*}}\nonumber \\
&& + K_0^{\mu+\nu-1}\lambda\int_{\Omega}a(x)|\widetilde{u}|^rdx \nonumber\\
&\leq&\frac{1}{4C_0}\Big
(\int_{\Omega\cap\{|\widetilde{u}|\geq K_0\}}|\widetilde{u}|^{r+\mu-1}|\widetilde{u}_k|
^{\frac{\nu p_s^*}{p}}dx\Big)^{\frac{p}{p_s^*}}+K_0^{\mu+\nu-1}\lambda C_1\Big(\int_{\mathbb{R}^N}\frac{{a(x)}^{\frac{1}{s_4}}}{{b(x)}^{\gamma}}\dx\Big)^{s_4}||u||_W^r
\end{eqnarray}
with
\begin{eqnarray}\label{Vuuk}
&&\int_{\Omega}V(x)|\widetilde{u}|^{p+\mu-1}|\widetilde{u}_k|^{\nu}dx \nonumber \\
&=&\int_{\Omega\cap\{|\widetilde{u}|\geq K_0\}}V(x)|\widetilde{u}|^{p+\mu-1}|\widetilde{u}_k|^{\nu}dx
+\int_{\Omega\cap\{|\widetilde{u}|<K_0\}}V(x)|\widetilde{u}|^{p+\mu-1}|\widetilde{u}_k|^{\nu}dx \nonumber \\
&\leq&\|V\|_{L^{\infty}(\Omega)}\Big
(\int_{\Omega\cap\{|\widetilde{u}|\geq K_0\}}|\widetilde{u}|^{p+\mu-1}dx\Big)^{1-\frac{p}{p_s^*}}\Big
(\int_{\Omega\cap\{|\widetilde{u}|\geq K_0\}}|\widetilde{u}|^{p+\mu-1}|\widetilde{u}_k|
^{\frac{\nu p_s^*}{p}}dx\Big)^{\frac{p}{p_s^*}}\nonumber \\
&& +K_0^{\mu+\nu-1}\int_{\Omega}V(x)|\widetilde{u}|^pdx \nonumber \\
&\leq&\frac{1}{4C_0}\Big
(\int_{\Omega\cap\{|\widetilde{u}|\geq K_0\}}|\widetilde{u}|^{p+\mu-1}|\widetilde{u}_k|
^{\frac{\nu p_s^*}{p}}dx\Big)^{\frac{p}{p_s^*}}
+K_0^{\mu+\nu-1}||u||_W^p
\end{eqnarray}
and
\begin{eqnarray}\label{buuk}
&&\int_{\Omega}b(x)|\widetilde{u}|^{q+\mu-1}|\widetilde{u}_k|^{\nu}dx \nonumber \\
&=&\int_{\Omega\cap\{|\widetilde{u}|\geq K_0\}}b(x)|\widetilde{u}|^{q+\mu-1}|\widetilde{u}_k|^{\nu}dx
+\int_{\Omega\cap\{|\widetilde{u}|<K_0\}}b(x)|\widetilde{u}|^{q+\mu-1}|\widetilde{u}_k|^{\nu}dx\nonumber \\
&\leq&\|b\|_{L^{\infty}(\Omega)}\Big
(\int_{\Omega\cap\{|\widetilde{u}|\geq K_0\}}|\widetilde{u}|^{q+\mu-1}dx\Big)^{1-\frac{p}{p_s^*}}\Big
(\int_{\Omega\cap\{|\widetilde{u}|\geq K_0\}}|\widetilde{u}|^{q+\mu-1}|\widetilde{u}_k|
^{\frac{\nu p_s^*}{p}}dx\Big)^{\frac{p}{p_s^*}}\nonumber \\
&&+K_0^{\mu+\nu-1}\int_{\Omega}b(x)|\widetilde{u}|^qdx\nonumber \\
&\leq&\frac{1}{4C_0}\Big
(\int_{\Omega\cap\{|\widetilde{u}|\geq K_0\}}|\widetilde{u}|^{q+\mu-1}|\widetilde{u}_k|
^{\frac{\nu p_s^*}{p}}dx\Big)^{\frac{p}{p_s^*}}
+K_0^{\mu+\nu-1}||u||_W^q.
\end{eqnarray}
Next we consider the following two cases to check that
$f(x,\widetilde{u})\in L^{\widetilde{q}}(\Omega)$ for $\widetilde{q}>\frac{N}{ps}$.
\par Case 1\quad If $q\leq p_s^*$, we take $\mu=1+p_s^*-q$ such that $p+\mu-1<r+\mu-1<q+\mu-1\leq\frac{(\mu+p-1)p_s^*}{p}$. Combining
(\ref{uukC0}) and (\ref{auuk})-(\ref{buuk}) yields
$$\Big(\int_{\Omega}|\widetilde{u}|^{\frac{(\mu+p-1)p_s^*}{p}}|\widetilde{u}_k|^{\frac{\nu p_s^*}{p}}dx\Big)^{\frac{p}{p_s^*}}
\leq4C_0K_0^{p_s^*-q+\nu}\Big(\lambda C_1\Big(\int_{\mathbb{R}^N}\frac{{a(x)}^{\frac{1}{s_4}}}{{b(x)}^{\gamma}}\dx\Big)^{s_4}||u||_W^r+
||u||_W^p+||u||_W^q\Big)$$
and then letting $k\rightarrow\infty$ obtains
\begin{eqnarray}\label{upsp}\int_{\Omega}|\widetilde{u}|^{\frac{(\mu+\nu+p-1)p_s^*}{p}}dx<\infty\end{eqnarray}
for all $\nu>0$. Recall that $1<p<r<q\leq p_s^*$ and $f(x,\widetilde{u})=\lambda a(x)|\widetilde{u}|^{r-2}\widetilde{u}-b(x)|\widetilde{u}|^{q-2}\widetilde{u}-V(x)|\widetilde{u}|^{p-2}\widetilde{u}$.
Since $\mu+\nu+p-1=p_s^*-q+\nu+p>p$,
it can be easily verified that $f(x,\widetilde{u})\in L^{\widetilde{q}}(\Omega)$ for $\widetilde{q}>\frac{N}{ps}$
by substituting some proper values of $\nu$ into (\ref{upsp}).
\par Case 2\quad If $q>p_s^*$, we take $\mu=1+\frac{p(q-p_s^*)}{p_s^*-p}$ such that $$p+\mu-1<r+\mu-1<q+\mu-1
=\frac{p_s^*(q-p)}{p_s^*-p}=\frac{(\mu+p-1)p_s^*}{p}.$$
Using the assumption that $u\in W\cap L_{loc}^{\frac{p_s^*(q-p)}{p_s^*-p}}(\mathbb{R}^{N})$
 and repeating the argument of (\ref{geqK0})-(\ref{buuk}) we can also get the same result (\ref{upsp}) and thus $f(x,\widetilde{u})\in L^{\widetilde{q}}(\Omega)$ for $\widetilde{q}>\frac{N}{ps}$.
\par Therefore, by the arbitrariness of $\Omega$ we can deduce from \cite[Theorem 3.1]{BP} and \cite[Theorem 3.13]{BP} that
any weak solution in $W$ to problem \eqref{problem1.1} is continuous
in $\mathbb{R}^{N}$ if $q\leq p_s^*$, and that any weak solution in $W\cap L_{loc}^{\frac{p_s^*(q-p)}{p_s^*-p}}(\mathbb{R}^{N})$
to problem \eqref{problem1.1} is continuous in $\mathbb{R}^{N}$ if $q> p_s^*$, which proves Theorem \ref{continuity}.\end{proof}

\section{Appendix}
Now we correct the proofs of both \cite[Lemma 10]{PXZ1} and  \cite[Lemma A.6]{PXZ} for $1<p<2$.
\begin{lem} The set $E=(E,\|\cdot\|_E)$ is a uniformly convex Banach space.
\end{lem}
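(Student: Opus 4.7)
The plan is to argue by contradiction, reducing the problem to the well-known uniform convexity of $L^p$-spaces via an isometric embedding. This approach sidesteps the separate Clarkson-type manipulations of the two summands of $\|\cdot\|_E$ that caused the errors in \cite[Lemma 10]{PXZ1} and \cite[Lemma A.6]{PXZ} for $1<p<2$. Suppose $E$ is not uniformly convex. Then there exist $\varepsilon_0 > 0$ and sequences $\{u_n\}, \{v_n\} \subset E$ with $\|u_n\|_E = \|v_n\|_E = 1$ and $\|u_n - v_n\|_E \geq \varepsilon_0$, yet $\|(u_n + v_n)/2\|_E \to 1$.

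The key construction is a single $L^p$-space into which $E$ embeds isometrically. Let $\Omega$ be the disjoint union $\mathbb{R}^{2N} \sqcup \mathbb{R}^N$, endowed with the $\sigma$-finite measure $\mu$ defined by $d\mu = |x-y|^{-N-ps}\,dx\,dy$ on the $\mathbb{R}^{2N}$ component and $d\mu = V(x)\,dx$ on the $\mathbb{R}^N$ component. For $u \in C_0^\infty(\mathbb{R}^N)$ define $Tu : \Omega \to \mathbb{R}$ by
\[
(Tu)(\omega) = \begin{cases} u(x) - u(y), & \omega = (x,y) \in \mathbb{R}^{2N}, \\ u(x), & \omega = x \in \mathbb{R}^N. \end{cases}
\]
Then $T$ is linear and, by construction,
\[
\|Tu\|_{L^p(\Omega,\mu)}^p = [u]_{s,p}^p + \|u\|_{p,V}^p = \|u\|_E^p,
\]
so $T$ is norm-preserving on $C_0^\infty(\mathbb{R}^N)$. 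Since $L^p(\Omega,\mu)$ is complete, $T$ extends uniquely to a linear isometry from $E$ into $L^p(\Omega,\mu)$, whose image is a closed subspace.

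Applying this extension to the offending sequences yields $F_n := Tu_n$ and $G_n := Tv_n$ in $L^p(\Omega,\mu)$ with $\|F_n\|_{L^p} = \|G_n\|_{L^p} = 1$, $\|F_n - G_n\|_{L^p} \geq \varepsilon_0$, and $\|(F_n + G_n)/2\|_{L^p} \to 1$. This contradicts the uniform convexity of $L^p(\Omega,\mu)$, which holds for every $1 < p < \infty$ on any measure space by Clarkson's inequalities (both the classical form for $p \geq 2$ and the dual form involving $p' = p/(p-1)$ for $1 < p < 2$). Hence $E$ must be uniformly convex.

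The main (and essentially only) conceptual step is choosing the right target space $(\Omega,\mu)$; once $E$ is realized as a subspace of a single $L^p$-space, uniform convexity is inherited automatically and the delicate interplay between the Gagliardo seminorm and the weighted $L^p$-norm — precisely what broke down in the earlier attempts for $1<p<2$ — never needs to be handled directly. The verifications that $\mu$ is $\sigma$-finite and that $Tu \in L^p(\Omega,\mu)$ for $u \in C_0^\infty(\mathbb{R}^N)$ are routine: the weight $|x-y|^{-N-ps}$ is integrable on any set bounded away from the diagonal, $[u]_{s,p}$ is finite by the very definition of $E$, and local integrability of $V$ is implicit in the finiteness of $\|\cdot\|_{p,V}$ on $C_0^\infty(\mathbb{R}^N)$.
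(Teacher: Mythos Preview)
Your proof is correct and takes a genuinely different route from the paper's. The paper first establishes uniform convexity of each summand $[\cdot]_{s,p}$ and $\|\cdot\|_{p,V}$ separately via the dual Clarkson inequality, and then spends most of its effort on a contradiction argument with a two-case analysis (normalizing so that $[u]_{s,p}=1$, extracting subsequences, and exploiting uniform convexity of $[\cdot]_{s,p}$) to prove the key intermediate estimate $\bigl[\tfrac{u+v}{2}\bigr]_{s,p}^p\le \tfrac{1-\delta_2}{2}\bigl([u]_{s,p}^p+[v]_{s,p}^p\bigr)$; only then is the $\|\cdot\|_{p,V}$ part added back in. Your approach bypasses this gluing step entirely: by realizing $\|\cdot\|_E$ as the $L^p$-norm on the disjoint-union measure space $(\mathbb{R}^{2N},|x-y|^{-N-ps}dx\,dy)\sqcup(\mathbb{R}^N,V\,dx)$, uniform convexity is inherited for free from $L^p(\Omega,\mu)$ for every $1<p<\infty$ at once. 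This is both shorter and more conceptual; the paper's argument, by contrast, stays closer to the original Clarkson computations and makes explicit where the failed constant $2^{1/(p-1)-1}$ in \cite{PXZ1,PXZ} enters, which has some expository value in an appendix whose purpose is to correct those proofs. One minor remark: the contradiction framing in your write-up is unnecessary---once the isometric embedding is in place, you may simply say that uniform convexity passes to subspaces and conclude directly.
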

\begin{proof} Since it has been proved in \cite[Lemma 10]{PXZ1} and \cite[Lemma A.6]{PXZ} that $E$ is
a Banach space for $p>1$ and $E$ is uniformly convex when $p\geq2$, we only need to show that $E$ is
uniformly convex when $1<p<2$. Recall that
$||u||_E=\left([u]_{s,p}^p+||u||_{p,V}^p\right)^{\frac{1}{p}}$ and denote
$$W^{s,p}(\mathbb{R}^N)=\{u\in L^p(\mathbb{R}^N): \ [u]_{s,p}<\infty\},$$
$$L^p(\mathbb{R}^N,V)=\{u\in L^p(\mathbb{R}^N): \ \|u\|_{p,V}<\infty\}.$$
We first show that both $\big(W^{s,p}(\mathbb{R}^N\big),[\cdot]_{s,p})$ and $\big(L^p(\mathbb{R}^N,V),\|\cdot\|_{p,V}\big)$
are uniformly convex.
\par Since $1<p<2$, by \cite[Lemma 2.13]{AF} and \cite[Lemma 2.37]{AF} for any $u,v\in W^{s,p}(\mathbb{R}^N\big)$
we have
\begin{eqnarray}\label{uvsp}
&&\Big[\frac{u+v}{2}\Big]_{s,p}^{p^{\prime}}+\Big[\frac{u-v}{2}\Big]_{s,p}^{p^{\prime}}\nonumber\\
&=& \Big\|\Big(\Big|\frac{u(x)+v(x)}{2}-\frac{u(y)+v(y)}{2}\Big|\big|x-y\big|
^{\frac{-N-ps}{p}}\Big)^{p^{\prime}}\Big\|_{L^{p-1}(\mathbb{R}^{2N})} \nonumber\\
&&+\Big\|\Big(\Big|\frac{u(x)-v(x)}{2}-\frac{u(y)-v(y)}{2}\Big|\big|x-y\big|
^{\frac{-N-ps}{p}}\Big)^{p^{\prime}}\Big\|_{L^{p-1}(\mathbb{R}^{2N})} \nonumber\\
&\leq&\Big\|\big|x-y\big|^{\frac{-N-ps}{p-1}}\Big(\Big|\frac{u(x)+v(x)}{2}-\frac{u(y)+v(y)}{2}\Big|^{p^{\prime}}
+\Big|\frac{u(x)-v(x)}{2}-\frac{u(y)-v(y)}{2}\Big|^{p^{\prime}}\Big)\Big\|_{L^{p-1}(\mathbb{R}^{2N})}\nonumber\\
&\leq&\Big\|\big|x-y\big|^{\frac{-N-ps}{p-1}}\Big(\frac{1}{2}|u(x)-u(y)|^p+\frac{1}{2}|v(x)-v(y)|^p\Big)^{\frac{1}{p-1}}
\Big\|_{L^{p-1}(\mathbb{R}^{2N})}\nonumber\\
&=&\Big(\frac{1}{2}[u]_{s,p}^p+\frac{1}{2}[v]_{s,p}^p\Big)^{\frac{1}{p-1}}.
\end{eqnarray}
So for each $\varepsilon\in(0,2]$, by (\ref{uvsp}) there exists $\delta_1=1-\big(1-(\varepsilon/2)^{p^{\prime}}\big)^{1/p^{\prime}}>0$ such that
if $[u]_{s,p}=[v]_{s,p}=1$ and $[u-v]_{s,p}\geq\varepsilon$, then $\Big[\frac{u+v}{2}\Big]_{s,p}\leq1-\delta_1$, which implies
$\big(W^{s,p}(\mathbb{R}^N\big),[\cdot]_{s,p})$ is uniformly convex. Similarly, it can be proved that
$\big(L^p(\mathbb{R}^N,V),\|\cdot\|_{p,V}\big)$ is also uniformly convex by repeating the above argument.
\par Now for any given $\varepsilon\in(0,2]$ we take $u,v\in E$ with $\|u\|_E=\|v\|_E=1$ and $\|u-v\|_E\geq\varepsilon$, then
$$[u]_{s,p}^p+\|u\|_{p,V}^p=\|u\|_E^p=1,$$
$$[v]_{s,p}^p+\|v\|_{p,V}^p=\|v\|_E^p=1,$$
$$[u-v]_{s,p}^p+\|u-v\|_{p,V}^p=\|u-v\|_E^p\geq\varepsilon^p.$$
Without loss of generality we may assume that $[u-v]_{s,p}^p\geq\varepsilon^p/2$, that is,
$[u-v]_{s,p}\geq\varepsilon/2^{1/p}$.
Next we show by contradiction
that there exists $\delta_2>0$ depending on $\varepsilon$ such that
\begin{equation}\label{claim}\Big[\frac{u+v}{2}\Big]_{s,p}^p\leq\frac{1-\delta_2}{2}([u]_{s,p}^p+[v]_{s,p}^p).\end{equation}
\par Note that $[u]_{s,p}\leq1$ and $[v]_{s,p}\leq1$ we have two cases for the proof of (\ref{claim}) as follows.
\par Case 1\quad When $[u]_{s,p}=1$ and $[v]_{s,p}\leq1$, suppose that (\ref{claim}) is false, then there must exist
an $\varepsilon_0>0$ and two sequences $\{u_k\}$ and $\{v_k\}$ in $E$ with $[u_k]_{s,p}=1, [v_k]_{s,p}\leq1$ and
$[u_k-v_k]_{s,p}\geq\varepsilon_0/2^{1/p}$ such that
\begin{equation}\label{uk+vk}\Big[\frac{u_k+v_k}{2}\Big]_{s,p}^p\geq\frac{1}{2}\big(1-\frac{1}{k}\big)([u_k]_{s,p}^p+[v_k]_{s,p}^p).\end{equation}
We claim that $\lim\limits_{k\rightarrow\infty}[v_k]_{s,p}=1$. Otherwise, one can take a subsequence $\{v_{k_i}\}\subset\{v_k\}$ such that
$[v_{k_i}]_{s,p}\leq A<1$, and apply triangle inequality to obtain
\begin{equation}\label{uki+vki}\Big[\frac{u_{k_i}+v_{k_i}}{2}\Big]_{s,p}^p\leq\frac{1}{2^p}\big(1+[v_{k_i}]_{s,p}\big)^p
\leq\frac{[u_{k_i}]_{s,p}^p+[v_{k_i}]_{s,p}^p}{2}\cdot\Big(\frac{1+A}{2}\Big)^p\Big/\Big(\frac{1+A^p}{2}\Big),\end{equation}
which contradicts (\ref{uk+vk}) since $\Big(\frac{1+A}{2}\Big)^p\Big/\Big(\frac{1+A^p}{2}\Big)<1$, so the claim holds.
\par Denote $w_k=\frac{v_k}{[v_k]_{s,p}}$, then $\lim\limits_{k\rightarrow\infty}[v_k-w_k]_{s,p}=0$. This, together with
(\ref{uk+vk}) and $\lim\limits_{k\rightarrow\infty}[v_k]_{s,p}=1$, yields
$$1=\lim\limits_{k\rightarrow\infty}\Big[\frac{u_k+v_k}{2}\Big]_{s,p}\leq
\lim\limits_{k\rightarrow\infty}\Big[\frac{u_k+w_k}{2}\Big]_{s,p}\leq1,$$
and thus $\lim\limits_{k\rightarrow\infty}\Big[\frac{u_k+w_k}{2}\Big]_{s,p}=1$.
However, by $[u_k-v_k]_{s,p}\geq\varepsilon_0/2^{1/p}$ for all $k\geq1$ we can get an integer $k_0$ such that
$[u_k-w_k]_{s,p}\geq\frac{\varepsilon_0}{2^{1+1/p}}$ for each $k\geq k_0$, and thus $\Big[\frac{u_k+w_k}{2}\Big]_{s,p}
\leq1-\delta_3$ by definition of uniform convexity, where $\delta_3>0$ depends on $\varepsilon_0$, which is a contradiction to $\lim\limits_{k\rightarrow\infty}\Big[\frac{u_k+w_k}{2}\Big]_{s,p}=1$. Hence, (\ref{claim}) follows.
\par Case 2\quad When $[u]_{s,p}\leq1$ and $[v]_{s,p}\leq1$, without loss of generality we may assume $[u]_{s,p}\geq[v]_{s,p}>0$.
Denote $\widetilde{u}=\frac{u}{[u]_{s,p}},\ \widetilde{v}=\frac{v}{[u]_{s,p}}$, then $[\widetilde{u}\ ]_{s,p}=1,\ [\widetilde{v}\ ]_{s,p}\leq1$ and
$[\widetilde{u}-\widetilde{v}\ ]_{s,p}\geq\varepsilon/2^{1/p}$ in that $[u-v]_{s,p}\geq\varepsilon/2^{1/p}$ for the above $\varepsilon$.
By the result of Case 1 the inequality (\ref{claim}) holds for
$\widetilde{u}$ and $\widetilde{v}$, and so (\ref{claim}) follows for  $u$ and $v$.
\par Therefore, noting $\frac{[u]_{s,p}^p+[v]_{s,p}^p}{2}\geq\Big[\frac{u-v}{2}\Big]_{s,p}^p\geq\frac{\varepsilon^p}{2^{p+1}}$
and applying (\ref{claim}) with $(a+b)^p\leq2^{p-1}(a^p+b^p)$ for all $a, b>0$ and $p\geq1$ obtain
\begin{eqnarray}
\Big\|\frac{u+v}{2}\Big\|_E&=&\Big(\Big[\frac{u+v}{2}\Big]_{s,p}^p+\Big\|\frac{u+v}{2}\Big\|_{p,V}^p\Big)^{\frac{1}{p}}\nonumber \\
&\leq&\Big((1-\delta_2)\frac{[u]_{s,p}^p+[v]_{s,p}^p}{2}+\frac{\|u\|_{p,V}^p+\|v\|_{p,V}^p}{2}\Big)^{\frac{1}{p}}\nonumber \\
&\leq&\Big(1-\delta_2\frac{\varepsilon^p}{2^{p+1}}\Big)^{\frac{1}{p}}\nonumber \\
&=:&1-\delta
\end{eqnarray}
with $\delta=1-\Big(1-\delta_2\frac{\varepsilon^p}{2^{1+p}}\Big)^{1/p}>0$, which proves $E$ is
uniformly convex when $1<p<2$.
\end{proof}

\end{document}